\numberwithin{equation}{section}
\theoremstyle{thmstyleone}
\newtheorem{theorem}{Theorem}[section]
\newtheorem{lemma}[theorem]{Lemma}
\newtheorem{conjecture}[theorem]{Conjecture}
\theoremstyle{thmstyletwo}
\newtheorem{remark}{Remark}
\theoremstyle{thmstylethree}
\newtheorem{definition}{Definition}
\journal{ }
\begin{document}

\begin{frontmatter}

\title{Spline quadrature and semi-classical orthogonal \\ Jacobi polynomials}
\author{Helmut Ruhland}
\ead{Helmut.Ruhland50@web.de}


\begin{abstract}

A theory of spline quadrature rules for arbitrary continuity class in a closed interval $[a, b]$ with arbitrary non-uniform subintervals based on semi-classical orthogonal Jacobi polynomials is proposed.
For continuity class $c \ge 2$ this theory depends on a conjecture.

\end{abstract}

\begin{keyword}

Spline quadrature \sep semi-classical \sep Jacobi polynomials \sep orthogonal polynomials

\MSC[2010]  65D32 \sep 65D07 \sep 41A15 \sep 42C05 \sep 33C45

\end{keyword}

\end{frontmatter}


\section{Introduction}

The problem of optimal quadrature rules for splines has been an active subject of research from the late 1950s to the late 1970s. The past few years have witnessed considerable renewed interest in quadrature rules for splines. This re-emergence has been mainly motivated by their fundamental relevance in the currently popular field of isogeometric analysis. Numerical methods for deriving optimal quadrature rules for spline spaces were proposed in various articles. 

A rich theory of optimal quadrature rules for polynomials in a closed interval, based on the orthogonal Legendre polynomials, exists for two centuries (Gauss-Legendre quadrature rules).

To my knowledge, a theory for $C^c$ polynomial splines with continuity class $c$ in a closed interval $[a, b]$ with arbitrary non-uniform subintervals based on orthogonal polynomials does not exist. Here semi-classical orthogonal Jacobi polynomials are introduced for this purpose.

In this article the method for calculating the nodes and weights of a quadrature rule is: start at the 2 subintervals at the boundaries of the interval $[a, b]$ and parse through all subintervals from the left and right to one in the middle. This method is not new and has been used for special cases in \citet{Nik1996} for uniform $C^1$ cubic splines, in \citet{AitBC2015} for non-uniform $C^1$ cubic splines and in \citet{BAC2017} for uniform $C^1$ quintic splines. It is not necessary to solve a system of equations, the only
non-elementary operation in this process is to get the roots of a polynomial.

For the $C^c$ splines for $c \ge 2$ the application of these orthogonal functions is based on a \textit{conjecture} in section \ref{SectReflMap}. The conjecture is not related to spline quadrature, it is \textit{only} related to rational maps defined by semi-classical orthogonal Jacobi polynomials, so the conjecture is \textit{isolated completely} from spline quadrature.

But there remain and arise more questions, the question of positivity of the weights, the question if the roots of the introduced polynomials are real and if they lie in the interval $[-1, +1]$. Also questions concerning the combinatorial distribution of the nodes in the subintervals are not addressed in this article. So throughout this article under quadrature rules we generally understand rules without the condition of positivity or other conditions on the roots. \newline

The goal of this article is to present the ideas which allow us to use semi-classical Jacobi polynomials in the spline quadrature, assuming the conjecture is true. It is not a mathematically rigorous proof. Proofs done with the assistance of a computer algebra system, which confirm the conjecture for low continuity classes, are not included. \newline

This article is organized as follows. In section \ref{SectDefJacobi}, the two types of semi-classical orthogonal Jacobi polynomials necessary for this article are defined.
In section \ref{SectDefectQM}, for the two introduced types of semi-classical orthogonal Jacobi polynomials the so-called defect of a polynomial is calculated. This is the difference between a weighted sum and the integral over this polynomial. Then the in section \ref{SectDefJacobi} introduced deformation of the Jacobi polynomials by Dirac $\delta$'s allows to express this defect in a simple manner.

The calculation of a defect for the type $Q \, ()$ semi-classical Jacobi polynomials with the formulae of previous section is only possible for the right half of a spline with a support of $2$ intervals. Therefore in section \ref{SectReflMap} a reflection map is introduced that allows to calculate the defect also for the left half of a support $2$ spline in section \ref{SectRecMap}. The conjecture appears in this section.

In section \ref{SectRecMap} the rational recursion map is defined. This allows to proceed from one subinterval to the next. Finally, in section \ref{SectStretch}, to handle the general case of non-uniform subintervals, stretching and the assigned stretching map are defined.

Finally in section \ref{SectFields} the fields for the node positions and weights are presented. This is
done also for the in this article not treated "1/2-rules". An upcoming article will treat these rules. \newline

The formulae in the appendices \ref{c0Formulae} and \ref{c1Formulae} are not used in this article. They are just added to show how these formulae look like in the case of continuity classes $c = 0, 1$ when the conjecture is true. Finally some examples of (sub)optimal quadrature rules obtained with these formulae
are given.

\section{Definition of the 2 types of semi-classical Jacobi orthogonal polynomials \label{SectDefJacobi} }

In the literature the word "semi-classical" orthogonal polynomials is used for polynomials obtained by modifying the weight function of the classical orthogonal polynomials in a certain not specified manner. The weight function $e^{- x^2 - a x^4}$ e.g. results in one kind of semi-classical Hermite orthogonal polynomials $H_n (a, x)$, which for $a=0$ is the classical Hermite polynomial. For the semi-classical orthogonal polynomials defined in here, we modify the weight function of the Jacobi polynomials by adding Dirac $\delta$'s and its derivations at one or at the two ends of a subinterval. This gives us exactly the necessary degrees of freedom for these polynomials.  \newline

Let $c$ be the continuity class of the polynomial splines throughout the whole article. To make the formulae not too overloaded I omit indices in $c$. I write e.g. $Q_n (\textbf{l}, x)$ instead of $Q_n^{(c)} (\textbf{l}, x)$, $J_n (x)$ instead of $P_n^{(c+1,c+1)}(x)$ ... \newline

\begin{definition} \label{DefinitionWeights}
Define  the vector space $\mathbb{D}=\mathbb{R}^{c+1}$, the symbol $\mathbb{D}$ like D(irac) or (D)istribution. This is the space of coefficients of the Dirac $\delta$'s and its derivations in the following weight functions.
We have $\textbf{l} = (l_0, l_1, \dots , l_{c-1}, l_{c}) \in \mathbb{D}$, the bold $\textbf{l}$ like (l)eft when the Dirac's are located at the left side of the interval $[-1, +1]$  at $-1$. In the same manner  $\textbf{r}$ like (r)ight when the Dirac's are located at $+1$.
To define orthogonal polynomials their weight functions are defined:
\begin{equation} W_Q(\textbf{l}, x) = w_Q(x) \left( 1 + \sum_{i=0}^{c} l_i \delta^{(i)}(x+1) \right) \label{WeightQ} \end{equation}
$$ w_Q(x) = (1 - x)^{c+1} $$
\begin{equation} W_M(\textbf{l}, \textbf{r}, x) = w_M(x) \left(1 + \sum_{i=0}^{c} l_i \delta^{(i)}(x+1) + \sum_{i=0}^{c} (-1)^i r_i \delta^{(i)}(x-1) \right) \label{WeightM} \end{equation}
$$ w_M(x) = 1 $$
For $\textbf{l} = \textbf{0}$ the weight function $W_Q(\textbf{0}, x) = w_Q(x)$ is the weight function of the Jacobi polynomials $P_n^{(c+1,0)}(x)$,  for $\textbf{l}, \textbf{r} = \textbf{0}$ the weight function $W_M(\textbf{0}, \textbf{0}, x) = w_M(x) = 1$  is the weight function of the Jacobi polynomials $P_n^{(0,0)}(x)$, the Legendre polynomials $P_n (x)$.
\end{definition}

\begin{definition}
With the weights from the definition above the semi-classical orthogonal Jacobi polynomials $Q_n (\textbf{l}, x)$ and $M_n (\textbf{l}, \textbf{r}, x)$ are defined by these properties:
\begin{align}
\enspace & \int\limits_{-1}^{+1} W_Q(\textbf{l}, x) Q_m (\textbf{l}, x) Q_n (\textbf{l}, x) dx & = 0 \quad
  \hbox{for} \enspace m \ne n \\
\enspace & \int\limits_{-1}^{+1} W_M (\textbf{l}, \textbf{r}, x) M_m (\textbf{l}, \textbf{r}, x) M_n (\textbf{l}, \textbf{r}, x) dx & = 0  \quad  \hbox{for} \enspace m \ne n \nonumber
\end{align}
These orthogonal polynomials are normalized i.e.
$Q_n (\textbf{0}, x) = P_n^{(c+1,0)}(x)$ and $M_n (\textbf{0}, \textbf{0}, x) = P_n^{(0,0)}(x) = P_n (x)$, the Jacobi polynomials. These polynomials don't have factors like $(1 + a l_1 + b l_0 r_1 + \dots )$
i.e. factors independent from $x$. Else such factors would give us normalization for $\textbf{l}$, $\textbf{r} = 0$ too.
For the continuity classes $c = 0, 1$ these semi-classical orthogonal Jacobi polynomials
were already introduced by:
\begin{tabbing}
\= - \citet{Krall1940} \qquad \qquad \qquad  \= with a Dirac $\delta$ at only 1 interval end  \\
\> - \citet{Koor1984}                        \> with 2 Dirac $\delta$'s at both interval ends \\
\> - \citet{ArMaAl2002}                      \> with additional derivations of Dirac's
\end{tabbing}

In newer publications \citep[see, e.g.][]{Dur2022}, see formulae (1.4) ... (1.6) the here defined semi-classical orthogonal polynomials belong to the Krall-Jacobi families.
\end{definition}

With respect to the weights (\ref{WeightQ}) and (\ref{WeightM}) the non-zero scalar products of the corresponding orthogonal polynomials are defined as:
\begin{align}
S_Q (m, \textbf{l}, x) & = \int\limits_{-1}^{+1} W_Q(\textbf{l}, x) Q_m (\textbf{l}, x) Q_m (\textbf{l}, x) dx \label{ScalProdQ} \\
S_M (m, \textbf{l}, \textbf{r}, x)  & = \int\limits_{-1}^{+1} W_M (\textbf{l}, \textbf{r}, x) M_m (\textbf{l}, \textbf{r}, x) M_m (\textbf{l}, \textbf{r}, x) dx \label{ScalProdM}
\end{align}

\section{The fundamental theorem for Gaussian quadrature applied to the semi-classical orthogonal Jacobi polynomials \label{SectDefectQM} }

\begin{theorem}[\cite{Ga2004}, pp. 22-32] \label{FundTheorem}
When the zeros $x_i$ of an orthogonal polynomial $P_n (x)$ with respect to the weight function $\omega (x)$
are taken as nodes  (how to determine the weights $w_i$ see the FT) of a Gaussian
quadrature rule $(x_i, w_i)$ the rule is exact for $deg (f) \le 2n - 1$.
$$ \int\limits_{-1}^{+1} \omega (x) f (x) dx = \sum_{i=1}^{n} w_i f (x_i)$$
\end{theorem}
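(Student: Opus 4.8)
The plan is to establish the classical Gaussian quadrature theorem, which states that using the $n$ zeros of an orthogonal polynomial as nodes yields a rule exact for all polynomials of degree at most $2n-1$. First I would let $x_1, \dots, x_n$ denote the zeros of $P_n(x)$ and define the weights by the interpolatory formula $w_i = \int_{-1}^{+1} \omega(x) \ell_i(x)\,dx$, where $\ell_i$ are the Lagrange basis polynomials for the nodes $x_i$. By construction, this quadrature rule integrates exactly every polynomial of degree at most $n-1$, since any such polynomial coincides with its Lagrange interpolant on the $n$ nodes.

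The key step is to extend exactness from degree $n-1$ up to degree $2n-1$. Given any $f$ with $\deg(f) \le 2n-1$, I would apply polynomial division by $P_n$ to write
\begin{equation}
f(x) = q(x) P_n(x) + \rho(x), \qquad \deg(q) \le n-1, \quad \deg(\rho) \le n-1. \nonumber
\end{equation}
Integrating against the weight, the term $\int_{-1}^{+1} \omega(x)\, q(x) P_n(x)\,dx$ vanishes by orthogonality of $P_n$ to all polynomials of degree below $n$ (here $\deg(q) \le n-1$), so that $\int_{-1}^{+1}\omega(x) f(x)\,dx = \int_{-1}^{+1}\omega(x)\rho(x)\,dx$. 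On the quadrature side, since each node $x_i$ is a root of $P_n$, the quotient term contributes nothing to the sum as well, giving $\sum_i w_i f(x_i) = \sum_i w_i \rho(x_i)$. Since $\deg(\rho) \le n-1$, the already-established exactness for low-degree polynomials equates the two remainder integrals, closing the argument.

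The main obstacle, such as it is, is the verification that orthogonality genuinely applies to $q$: this requires $P_n$ to be orthogonal to the entire space of polynomials of degree strictly less than $n$, which follows from the definition of $P_n$ as an orthogonal polynomial together with the fact that $\{P_0, \dots, P_{n-1}\}$ spans that space. A secondary point worth stating explicitly is that the $n$ zeros $x_i$ are real, simple, and lie in the open interval $(-1, +1)$, so that the nodes and the interpolatory weights are well defined; this standard fact for orthogonal polynomials with respect to a positive weight should be invoked or cited. Since the statement here is attributed to \cite{Ga2004}, I would present this as a concise self-contained recollection of the standard argument and refer the reader to that source for the full details on the weights and the location of the zeros.
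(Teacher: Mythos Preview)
The paper does not supply its own proof of this theorem; it simply quotes the result and cites Gautschi (2004), pp.~22--32. Your argument is the standard division-with-remainder proof and is correct for the classical setting of a positive weight.

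One remark worth making: immediately after stating the theorem, the paper observes that the proof goes through when the weight $\omega$ is a distribution (here, a classical Jacobi weight plus Dirac $\delta$'s and their derivatives at the endpoints). Your sketch covers this as well, since the orthogonality relation and the interpolatory definition of the $w_i$ are purely algebraic in the linear functional $f \mapsto \int \omega f$. However, your secondary point about the zeros being real, simple, and inside $(-1,+1)$ relies on positivity of the weight, which is \emph{not} guaranteed for the distributional weights used in the paper; indeed the author explicitly leaves open the questions of real roots, roots in $[-1,+1]$, and positive weights. So that caveat should be dropped or flagged as applying only to the classical case.
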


Looking at the proof of this fundamental theorem it can be seen, that it is also valid when the weight function is a distribution. Our two weight functions in definition \ref{DefinitionWeights}
are distributions.

\begin{remark}
If in the Theorem \ref{FundTheorem} we take the polynomial $P_n (x) + \omega P_{n-1} (x)$ instead of 
$P_n (x)$ the result is a suboptimal quadrature rule depending on $\omega$  that is exact for
$deg (f) \le 2n - 2$.
\end{remark}

\begin{definition}
For given nodes and weigths $(x_i, w_i)$ in an interval or subinterval the defect $D (g)$ for a function
$g (x)$ is the difference between the weighted sum and the integral over $g (x)$:
\begin{equation} D (g) = \sum_{i=1}^{n} w_i \, g (x_i) - \int\limits_{-1}^{+1} g (x) dx
\nonumber \end{equation}
\end{definition}

\subsection{The fundamental theorem applied to the orthogonal, one-sided \textit{Q} () }

\begin{lemma} \label{LemmaFundQ}
Let the $n$ nodes $x_i$ be the roots of $Q_n (\textbf{l}, x)$ and the $n$ weights be:  
\begin{equation} w_i = \frac{a_{n}}{a_{n-1}} \frac{ S_Q (n - 1, \textbf{l}, x_i) }{ Q_n' (\textbf{l}, x_i) \, Q_{n-1} (\textbf{l}, x_i) (1 - x_i)^{c+1}} \label{Weight_iQ} \end{equation}
notice the additional factor $(1 - x_i)^{c+1}$ in the denominator, then the defect for a polynomial $g (x)$
is
\begin{equation}
D (g) = \sum_{i=0}^{c}  (-1)^i l_i \, g^{(i)} (-1) \label{DefectSplineQ}
\end{equation}
under these conditions:
\begin{equation} (1 - x)^{c+1} \mid g (x), \quad deg (g) \le 2n + c  \label{DefectCondQ}  \end{equation}
The first part in these conditions is equivalent to $g^{(k)} (+1) = 0$ for $0 \le k \le c$, this means
$g (x)$ has to be e.g. a spline with a support of $1$ interval or the right half of a spline with a support of $2$ intervals. When $g (x)$ is a spline with a support of $1$ interval then the first part of the conditions
(\ref{DefectCondQ}) is fulfilled and the rhs of the defect is $0$. Then $(x_i, w_i)$ is an exact quadrature rule for splines $S (x)$ with a support of 1 interval.
\end{lemma}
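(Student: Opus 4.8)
The plan is to derive the defect formula directly from the fundamental theorem (Theorem \ref{FundTheorem}), which, as noted immediately after its statement, remains valid when the weight is the distribution $W_Q(\textbf{l}, x)$. Since $Q_n(\textbf{l}, x)$ is by definition the degree-$n$ orthogonal polynomial for this weight, taking its roots $x_i$ as nodes yields a Gaussian rule that is exact against $W_Q(\textbf{l}, x)$ for every polynomial $f$ with $\deg(f) \le 2n-1$. The genuine Gaussian weights here are the standard Christoffel expression in terms of orthogonal polynomials,
$$\tilde w_i = \frac{a_n}{a_{n-1}} \frac{S_Q(n-1, \textbf{l}, x_i)}{Q_n'(\textbf{l}, x_i)\, Q_{n-1}(\textbf{l}, x_i)},$$
so that the exactness statement reads $\int_{-1}^{+1} W_Q(\textbf{l}, x) f(x)\, dx = \sum_{i=1}^{n} \tilde w_i\, f(x_i)$. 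Comparing with (\ref{Weight_iQ}), the weights of the lemma are exactly $w_i = \tilde w_i / (1 - x_i)^{c+1}$; accounting for this extra factor is the whole content of the proof.

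Next I would exploit the divisibility hypothesis in (\ref{DefectCondQ}). Writing $g(x) = (1-x)^{c+1} f(x) = w_Q(x) f(x)$ for a polynomial $f$, the degree bound $\deg(g) \le 2n+c$ forces $\deg(f) \le 2n - 1$, precisely the range in which the Gaussian rule for $f$ is exact. The idea is then to evaluate $\int_{-1}^{+1} W_Q(\textbf{l}, x) f(x)\, dx = \sum_{i=1}^{n} \tilde w_i f(x_i)$ and translate each side back into quantities in $g$.

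On the node side this is immediate: $f(x_i) = g(x_i)/(1-x_i)^{c+1}$, so $\sum_i \tilde w_i f(x_i) = \sum_i w_i g(x_i)$ with the $w_i$ of (\ref{Weight_iQ}). On the integral side I split $W_Q$ into its smooth and distributional parts. The smooth part contributes $\int_{-1}^{+1} w_Q(x) f(x)\, dx = \int_{-1}^{+1} g(x)\, dx$, while the $i$-th Dirac term contributes $l_i\,\langle \delta^{(i)}(x+1),\, w_Q f\rangle = (-1)^i l_i\, (w_Q f)^{(i)}(-1) = (-1)^i l_i\, g^{(i)}(-1)$. Summing over $i$ and subtracting the integral from the weighted sum produces exactly (\ref{DefectSplineQ}). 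For the closing claim, a $C^c$ spline supported on a single interval must agree with the zero function outside $[-1,+1]$ to order $c$ at both ends, so $g^{(k)}(\pm 1) = 0$ for $0 \le k \le c$; the vanishing at $+1$ supplies the divisibility in (\ref{DefectCondQ}), and the vanishing at $-1$ annihilates the right-hand side, giving $D(g) = 0$ and hence exactness.

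The step I expect to require the most care is the distributional pairing $\langle \delta^{(i)}(x+1), w_Q f\rangle = (-1)^i g^{(i)}(-1)$: the clean outcome hinges on the fact that $g = w_Q f$ holds as an identity of polynomials, so no Leibniz expansion of $(w_Q f)^{(i)}$ is needed and the derivative lands directly on $g$. The only other point demanding justification is that Gautschi's weight formula transfers verbatim to the distributional weight $W_Q(\textbf{l}, x)$; this is precisely what the remark following Theorem \ref{FundTheorem} grants, since its proof uses only that the rule integrates the orthogonal polynomials correctly against the (possibly distributional) weight.
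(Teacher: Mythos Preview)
Your proposal is correct and follows essentially the same route as the paper: apply the fundamental theorem with the distributional weight $W_Q(\textbf{l},x)$ to obtain Gaussian weights $\tilde w_i$ (the paper's $w_i^{*}$), substitute $f(x)=g(x)/(1-x)^{c+1}$ so that the degree condition becomes $\deg(g)\le 2n+c$, and then read off the defect by separating the smooth and Dirac parts of $W_Q$. The only differences are cosmetic (your $f$ is the paper's $h$), and your additional remarks on the distributional pairing and the single-interval spline case make explicit what the paper leaves implicit.
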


\begin{proof}
Applying the fundamental theorem to the orthogonal polynomials $Q_n (\textbf{l}, x)$ and the
corresponding weight function  $W_Q (\textbf{l}, x)$ we get the following result:
The $n$ nodes $x_i$ are the roots of $Q_n (\textbf{l}, x)$, the weights are:  
$$ w_i^{*} = \frac{a_{n}}{a_{n-1}} \frac{ S_Q (n - 1, \textbf{l}, x_i) }{ Q_n' (\textbf{l}, x_i) \, Q_{n-1} (\textbf{l}, x_i)} $$
the scalar product $S_Q (n, \textbf{l}, x)$ see (\ref{ScalProdQ}), $a_n$ is the coefficient of $x^n$ in $Q_n (\textbf{l}, x)$ 

The equation connecting integral and the weighted sum is:
$$ \sum_{i=1}^{n} w_i^{*} h (x_i) = \int\limits_{-1}^{+1} W_Q (\textbf{l}, x) h (x) dx$$
for $deg (h) \le 2n - 1$

We set now $h (x) = g (x) / w_Q (x) = g (x) / (1 - x)^{c+1}$ and until the rest of this proof we assume the following condition is fulfilled:
\begin{equation} (1 - x)^{c+1} \mid g (x), \quad deg (g) \le 2n + c  \nonumber  \end{equation}

With this new $g (x)$ we get: 
$$ \sum_{i=1}^{n} w_i^{*} / (1 - x_i)^{c+1} g (x_i) = \int\limits_{-1}^{+1} \left( 1 + \sum_{l=0}^{c} l_i \delta^{(i)}(x+1) \right) g (x) dx$$

We define a new weight by $w_i = w_i^{*} / (1 - x_i)^{c+1}$ and get our final formula for the weights:
\begin{equation} w_i = \frac{a_{n}}{a_{n-1}} \frac{ S_Q (n - 1, \textbf{l}, x_i) }{ Q_n' (\textbf{l}, x_i) \, Q_{n-1} (\textbf{l}, x_i) (1 - x_i)^{c+1}} \nonumber \end{equation}

Now with this new weights $w_i$ and with the following definition for the Dirac $\delta$'s
\begin{equation} \int\limits_{}^{} \delta^{(k)} (x + a) f (x) dx = (-1)^k f^{(k)} (-a) \label{DefDirac} \end{equation}
$f^{(k)} (x)$ being the k-th derivation of $f (x)$, and $f^{(0)} (x) = f (x)$,  we get the defect:
\begin{equation} D (g) = \sum_{i=1}^{n} w_i \, g (x_i) - \int\limits_{-1}^{+1} g (x) dx = \sum_{i=0}^{c}  (-1)^i l_i \, g^{(i)} (-1) \nonumber \end{equation}
\end{proof}

\subsection{The fundamental theorem applied to the orthogonal, two-sided \textit{M} () }

\begin{lemma} \label{LemmaFundM}
Let the $n$ nodes $x_i$ be the roots of $M_n (\textbf{l}, \textbf{r}, x)$ and the $n$ weights be:  
\begin{equation}  w_i = \frac{a_{n}}{a_{n-1}} \frac{ S_M (n - 1, \textbf{l}, \textbf{r}, x_i) }{ M_n' (\textbf{l}, \textbf{r}, x_i) \, M_{n-1} (\textbf{l}, \textbf{r}, x_i)} \label{Weight_iM} \end{equation} 
then the defect for a polynomial $g (x)$ is:
\begin{equation}
D (g) = \sum_{i=0}^{c}  (-1)^i l_i \, g^{(i)} (-1)  + \sum_{i=0}^{c} r_i \, g^{(i)} (+1) \label{DefectSplineM}
\end{equation}
under this condition:
\begin{equation} deg (g) \le 2n - 1  \label{DefectCondM}  \end{equation}
When $g (x)$ is a spline with a support of $1$ interval then the rhs of the defect is $0$. Then $(x_i, w_i)$ is an exact quadrature rule for splines $S (x)$ with a support of 1 interval.
\end{lemma}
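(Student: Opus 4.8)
The plan is to mirror the proof of Lemma~\ref{LemmaFundQ}, but the argument is in fact shorter because the base weight $w_M(x) = 1$ requires no division. First I would apply the fundamental theorem (Theorem~\ref{FundTheorem}) directly to the orthogonal polynomials $M_n(\textbf{l}, \textbf{r}, x)$ with respect to their distributional weight $W_M(\textbf{l}, \textbf{r}, x)$ from (\ref{WeightM}). This produces nodes $x_i$ equal to the roots of $M_n$ and weights exactly as in (\ref{Weight_iM}), together with the exactness relation $\sum_{i=1}^n w_i\, g(x_i) = \int_{-1}^{+1} W_M(\textbf{l}, \textbf{r}, x)\, g(x)\, dx$ for every polynomial $g$ with $\deg(g) \le 2n - 1$. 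Because no substitution $h = g/w_M$ is needed here, the degree bound stays at the clean $\deg(g) \le 2n-1$ of (\ref{DefectCondM}), without the upward shift to $2n+c$ that appeared in the one-sided case.

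Next I would expand the right-hand integral using $w_M \equiv 1$ and the two families of Dirac terms in (\ref{WeightM}), then apply the derivative identity (\ref{DefDirac}) to each. The only step requiring genuine care --- and the one I would flag as the main obstacle --- is the sign bookkeeping at the right endpoint $+1$. The weight (\ref{WeightM}) already carries an explicit factor $(-1)^i$ on $r_i$, while integrating $\delta^{(i)}(x-1)$ against $g$ contributes a further $(-1)^i$; the product $(-1)^{2i} = 1$, so the right-endpoint contribution collapses to $\sum_{i=0}^{c} r_i\, g^{(i)}(+1)$ with no surviving sign. At the left endpoint no such cancellation occurs and one is left with $\sum_{i=0}^{c} (-1)^i l_i\, g^{(i)}(-1)$. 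Subtracting $\int_{-1}^{+1} g\, dx$ from both sides of the exactness relation then gives the defect formula (\ref{DefectSplineM}) directly. This also explains, after the fact, why the $(-1)^i$ was deliberately placed on the $+1$ Dirac's in Definition~\ref{DefinitionWeights}: it is exactly what makes the two endpoint sums in the defect sign-symmetric.

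Finally, for the closing assertion, I would observe that a $C^c$ spline supported on a single interval must vanish together with its first $c$ derivatives at both endpoints, so $g^{(i)}(\pm 1) = 0$ for $0 \le i \le c$. Every term on the right-hand side of (\ref{DefectSplineM}) then vanishes, giving $D(g) = 0$ and hence exactness of the rule $(x_i, w_i)$ on such splines.
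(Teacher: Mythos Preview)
Your proposal is correct and follows essentially the same route as the paper's own proof: apply Theorem~\ref{FundTheorem} directly to $M_n$ with weight $W_M$, write out the exactness relation, expand the Dirac terms via (\ref{DefDirac}), and read off the defect. Your additional remarks on the sign cancellation at $+1$ and on why a single-interval spline kills the right-hand side of (\ref{DefectSplineM}) are accurate elaborations that the paper leaves implicit.
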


\begin{proof}
Applying the fundamental theorem to the orthogonal polynomials $M_n (\textbf{l}, \textbf{r}, x)$ and the
corresponding weight function  $W_M (\textbf{l}, \textbf{r}, x)$ we get the following result:

The $n$ nodes $x_i$ are the roots of $M_n (\textbf{r}, x)$, the weights are:  
\begin{equation}  w_i = \frac{a_{n}}{a_{n-1}} \frac{ S_M (n - 1, \textbf{l}, \textbf{r}, x_i) }{ M_n' (\textbf{l}, \textbf{r}, x_i) \, M_{n-1} (\textbf{l}, \textbf{r}, x_i)} \nonumber \end{equation} 
the scalar product $S_M (n, \textbf{l}, \textbf{r}, x)$ see (\ref{ScalProdM}), $a_n$ is the coefficient of $x^n$ in $M_n (\textbf{l}, \textbf{r}, x)$ 

The equation connecting integral and the weighted sum is:
$$  \sum_{i=1}^{n} w_i g (x_i) = \int\limits_{-1}^{+1} W_M (\textbf{l}, \textbf{r}, x) g (x) dx$$
for $deg (g) \le 2n - 1$

$$ \sum_{i=1}^{n} w_i g (x_i) = \int\limits_{-1}^{+1} 
\left( 1 + \sum_{l=0}^{c} l_i \delta^{(i)}(x+1) + \sum_{l=0}^{c} (-1)^i r_i \delta^{(i)}(x-1) \right) g (x) dx$$

Now with the definition (\ref{DefDirac}) of the Dirac $\delta$'s we get the defect:
\begin{equation} D (g) = \sum_{i=1}^{n} w_i \, g (x_i) - \int\limits_{-1}^{+1} g (x) dx = \sum_{i=0}^{c} (-1)^i l_i \, g^{(i)} (-1) + \sum_{i=0}^{c} r_i \, g^{(i)} (+1) \nonumber \end{equation}
\end{proof}

\section{ The reflected one-sided \textit{Q} (), the reflection map and a conjecture \label{SectReflMap} }

With the formula (\ref{DefectSplineM}) and a restriction only in the degree of a polynomial $g (x)$ we can calculate the defect of the polynomial for a subinterval of type $M ()$.

For a subinterval of type $Q \, ()$ the situation is different. The condition (\ref{DefectCondQ}) in the formula (\ref{DefectSplineQ}) for the defect requires $g^{(k)} (+1) = 0$ for $0 \le k \le c$. But we need the defect for polynomials with $g^{(k)} (+1) \ne 0$ and now $g^{(k)} (-1) = 0$ too. This is necessary to calculate the defect of all splines with a support of $2$ intervals. 

Therefore a goal of this section is, to find for the polynomial $Q_n (\textbf{l},\textbf{-} x)$ another vector $\textbf{l}_R$ (the subscript R means reflected) with the property:
$$ Q_n (\textbf{l},\textbf{-} x) = \alpha \, Q_n (\textbf{l}_R, x) $$
The factor $\alpha$ is allowed (and has to be allowed) because we are only interested in the roots of the $Q_n (\textbf{l}, x)$ and a factor doesn't change anything.

\begin{definition} \label{DefinitionSpaceJ}
In \ref{DefinitionWeights} the vector space $\mathbb{D}=\mathbb{R}^{c+1}$  was defined. This is the space
of coefficients of the Dirac $\delta$'s in a weight function for orthogonal polynomials.

Now the projective space $\mathbb{J}=\mathbb{RP}^{c+1}$ ist defined, the symbol $\mathbb{J}$ like J(acobi), the space of coefficients in a sum of symmetric Jacobi polynomials. So we have $\textbf{j} = (j_0 : j_1 : \dots : j_{c} : j_{c+1}) \in \mathbb{J}$. The projective space is the appropriate space, because the roots of the so defined polynomial do not depend an a factor and because the weights (\ref{Weight_iQ}) are homogeneous of degree 0 in this polynomial the whole quadrature rule is invariant under this scaling.
The spaces $\mathbb{D}$ and $\mathbb{J}$ have the same dimension $c+1$.

The following monomial map $r$, the symbol $r$ like r(eflection), is an involution in $\mathbb{J}$, the roots of the corresponding polynomials are just reflected at $0$. The coefficients of the odd degree symmetric Jacobi polynomials are mapped to the negative coefficients (the signs here for even $c$). 
\begin{align}
r : \enspace & \mathbb{J} & \rightarrow & \enspace \mathbb{J} \nonumber \\
& ( j_{0} : j_{1} : \dots : j_{c} : j_{c+1} ) & \mapsto & \enspace ( + j_{0} : - j_{1} : \dots : + j_{c} : - j_{c+1} ) \label{ReflMap} \\
& j_i   & \mapsto & \enspace (-1)^i \, j_i \nonumber
\end{align}
In the following lower case letters are used for maps in $\mathbb{J}$ and the corresponding upper case letters for maps in $\mathbb{D}$
\end{definition}

\begin{conjecture} \label{ConjMain}
The semi-classical Jacobi polynomials $Q_n (\textbf{l}, x)$ can be expressed by a sum of symmetric Jacobi
polynomials $ J_{n} (x) = P_n^{(c+1,c+1)} (x)$, the coefficients $j_{n,i} (\textbf{l})$ are polynomials in the coefficients of the vector $\textbf{l}$ and $n$:
\begin{equation} Q_n (\textbf{l}, x) = \sum_{i=0}^{c+1} j_{n,i} (\textbf{l}) J_{n-i} (x) \label{QasSumJ} \end{equation}
Though not used in this section, to be complete (with different coefficients $j_{n,i}$ now depending on 2 parameters $\textbf{l}, \textbf{r}$):
\begin{equation} M_n (\textbf{l}, \textbf{r}, x) = \sum_{i=0}^{2c+3} j_{n,i} (\textbf{l}, \textbf{r}) J_{n-i} (x) \nonumber \end{equation}
The $j_{n,i} (\textbf{l})$ in (\ref{QasSumJ}) define a polynomial map:
\begin{align}
f_n : \enspace  & \mathbb{D} & \rightarrow & \enspace \mathbb{J} \nonumber \\
& \textbf{l} = ( l_0, \dots , l_{c} ) & \mapsto & \enspace ( j_{n,0} (\textbf{l}) : \dots : j_{n,c+1} (\textbf{l}) ) \label{MapTrans}
\end{align}
The inverse of the map $f$ is defined and the $l_{n,i}$ are rational in $\textbf{j}$: 
\begin{align}
f_n^{-1} : \enspace & \mathbb{J} & \rightarrow & \enspace \mathbb{D} \nonumber \\
& \textbf{j} = ( j_{0} : \dots : j_{c+1} ) & \mapsto & \enspace ( l_{n,0} (\textbf{j}), \dots , l_{n,c} (\textbf{j}) )  \label{MapTransInv}
\end{align}
\end{conjecture}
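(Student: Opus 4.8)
The plan is to prove the finite expansion (\ref{QasSumJ}) directly from the orthogonality that defines $Q_n(\textbf{l}, x)$, and then to read off the structure of the maps $f_n$ and $f_n^{-1}$ from the resulting linear algebra. Write $\langle f, g\rangle_J = \int_{-1}^{+1} (1-x^2)^{c+1} f(x)\,g(x)\, dx$ for the inner product of the symmetric Jacobi polynomials $J_m = P_m^{(c+1,c+1)}$. Since the $J_m$ form a basis of the polynomials, it suffices to show that $\langle Q_n, J_m\rangle_J = 0$ whenever $m \le n-c-2$; the surviving coefficients then sit at degrees $n-c-1, \dots, n$, which is exactly (\ref{QasSumJ}) after setting $i = n-m$.

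First I would test the defining orthogonality of $Q_n$ against $p(x) = (1+x)^{c+1} J_m(x)$. For $m \le n-c-2$ this $p$ has degree $m+c+1 \le n-1$, so $\int_{-1}^{+1} W_Q(\textbf{l}, x)\,Q_n(x)\,p(x)\, dx = 0$. Splitting $W_Q$ into its absolutely continuous and Dirac parts, the continuous part contributes $\int (1-x)^{c+1} Q_n \cdot (1+x)^{c+1} J_m\, dx = \langle Q_n, J_m\rangle_J$, while each Dirac term is a derivative $\frac{d^i}{dx^i}\bigl[(1-x)^{c+1} Q_n(x)\,p(x)\bigr]$ evaluated at $x=-1$ with $0 \le i \le c$. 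The key observation is that $p$ carries the factor $(1+x)^{c+1}$, so by Leibniz every such term still contains an undifferentiated factor $(1+x)^{c+1-j}$ with $j \le i \le c$ and therefore vanishes at $x=-1$; all Dirac contributions drop out, forcing $\langle Q_n, J_m\rangle_J = 0$. The same device with $p(x) = (1-x^2)^{c+1} J_m(x)$, whose factor now annihilates the Dirac terms at both $\pm 1$, yields the $M_n$ expansion, with the truncation length read off from the degree count $\deg p \le n-1$.

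Next I would extract the maps. Substituting $Q_n = \sum_{i=0}^{c+1} j_{n,i} J_{n-i}$ into the remaining orthogonality conditions — e.g.\ $\int W_Q(\textbf{l}, x)\,Q_n(x)\,x^k\, dx = 0$ for $0 \le k \le c$, whose monomials span a complement to the polynomials of degree $\le n-1$ divisible by $(1+x)^{c+1}$ — yields a homogeneous linear system $C(\textbf{l})\,\textbf{j} = 0$ whose $(c+1)\times(c+2)$ matrix $C$ has entries affine in $\textbf{l}$ (a constant Jacobi integral plus an $\textbf{l}$-linear sum of boundary derivatives). The projective solution is the vector of signed maximal minors of $C$, each a polynomial in $\textbf{l}$; this is the polynomial map $f_n$ of (\ref{MapTrans}), with the normalization $Q_n(\textbf{0}, x) = P_n^{(c+1,0)}(x)$ fixing the scaling (and guaranteeing the minors are not all identically zero, since they are nonzero at $\textbf{l}=\textbf{0}$). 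Reorganizing the very same $c+1$ equations as a linear system for $\textbf{l}$, with coefficients now linear in $\textbf{j}$, and solving by Cramer's rule presents each $l_{n,m}(\textbf{j})$ as a ratio of two degree-$(c+1)$ homogeneous polynomials in $\textbf{j}$, hence a rational function well defined on $\mathbb{J}$; this is (\ref{MapTransInv}).

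The hard part, and the reason this stays a conjecture, is not the formal linear algebra but the non-degeneracy that makes it meaningful for every $n$ and all $\textbf{l}$. Because $W_Q$ is a signed distribution (the $l_i$ carry arbitrary sign), the bilinear form is indefinite and one cannot invoke the classical existence and uniqueness of orthogonal polynomials; one must instead show that $C(\textbf{l})$ keeps full row rank off a proper subvariety, and, crucially, that the Cramer denominator in $\textbf{j}$ is not identically zero so that $f_n^{-1}$ is a genuine inverse and $f_n$ is birational. Establishing these determinantal non-vanishings \emph{uniformly} in $n$ and $c$ — equivalently, controlling a growing family of resultants built from boundary derivatives of Jacobi polynomials — is precisely what resists a closed-form argument and what the computer-algebra checks confirm only for small $c$. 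I expect this determinantal/birationality step to be the main obstacle.
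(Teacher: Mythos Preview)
The paper does not prove this statement: it is explicitly labeled a \emph{conjecture}, and Appendix~A records its status as ``proven by a CAS'' for $c\le 1$, checked numerically for a few $n$ when $c=2$, and open for $c\ge 3$. There is therefore no paper proof to compare your attempt against; your write-up actually goes further than the paper does.

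Your argument for the finite expansion is correct and is, in fact, a clean proof of that part of the conjecture for all $c$ and $n$. Testing the $W_Q$-orthogonality of $Q_n$ against $p(x)=(1+x)^{c+1}J_m(x)$ with $m\le n-c-2$ kills every Dirac contribution via the $(1+x)^{c+1}$ factor and leaves precisely $\langle Q_n,J_m\rangle_J=0$, forcing $Q_n\in\operatorname{span}\{J_{n-c-1},\dots,J_n\}$, which is (\ref{QasSumJ}). The $M_n$ variant with $p=(1-x^2)^{c+1}J_m$ is equally valid; incidentally your degree count gives an upper index $2c+2$ rather than the $2c+3$ printed in the conjecture, and the explicit $c=0,1$ formulae in Appendices~C and~D agree with your count. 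Your description of $f_n$ as the vector of signed maximal minors of an affine-in-$\mathbf l$ matrix, and of $f_n^{-1}$ via Cramer's rule on the same equations reorganised as a linear system in $\mathbf l$, is also correct linear algebra.

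What keeps the statement a conjecture you identify accurately: the non-degeneracy needed for $f_n$ to be birational (nonvanishing of the relevant determinants, hence genuine invertibility of $f_n$) is exactly what the paper cannot establish in closed form and delegates to computer algebra. One further aspect your route does not address is the clause ``polynomials in \dots\ and $n$'': your matrix entries are specific Jacobi integrals and boundary derivatives whose $n$-dependence is a priori only rational, whereas the explicit formulae in Appendices~C and~D (and the remark in Appendix~A that for $c=2$ ``the reflection map was not calculated for a general $n$ as variable'') show that the intended content includes closed-form coefficients valid uniformly in $n$. That uniform-in-$n$ control, together with the determinantal nonvanishing, is the part that remains open.
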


Assuming this conjecture is true we get the following lemma, which answers the question of this section:
\begin{lemma}
Let $R_n$ be the reflection map $r$ in $\mathbb{J}$ from definition \ref{DefinitionSpaceJ}, transformed by the map $f_n$ and its inverse into an involution in $\mathbb{D}$:
\begin{equation} R_n  = f_n^{-1} \circ r \circ f_n \label{ReflMapR} \end{equation}
where $f \circ g$ is the composition of maps $f \, (g \, ())$, because of the associativity the parenthesis can be omitted. The composition of the maps in (\ref{ReflMapR}) can be visualized by this commutative diagram:
\begin{equation} \begin{CD}
 \mathbb{J}   @>r>>      \mathbb{J}      \\
 @AA{f_n}A               @VV{f_n^{-1}}V  \\
 \mathbb{D}   @>>R_n>    \mathbb{D}
\end{CD} \end{equation}

This $\textbf{l}_R = R_n (\textbf{l})$ has the desired property $ Q_n (\textbf{l},\textbf{-} x) = \alpha \, Q_n (\textbf{l}_R, x) $, i.e. both polynomials have the same roots.
\end{lemma}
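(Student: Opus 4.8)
The plan is to reduce the claim to the single parity identity $J_m(-x) = (-1)^m J_m(x)$, which holds for every $m$ because the weight $(1-x^2)^{c+1}$ of $J_m = P_m^{(c+1,c+1)}$ is even, so each $J_m$ has definite parity $(-1)^m$. Granting the expansion (\ref{QasSumJ}) from Conjecture \ref{ConjMain}, the rest is bookkeeping: the reflection $x \mapsto -x$ on the polynomial side should correspond exactly to the sign-flip map $r$ of (\ref{ReflMap}) on the coefficient side, and $R_n$ in (\ref{ReflMapR}) is by design the transport of $r$ through $f_n$.

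First I would substitute $x \mapsto -x$ in (\ref{QasSumJ}) and apply the parity identity term by term. Using $(-1)^{n-i} = (-1)^n(-1)^i$, this gives
\begin{equation*}
Q_n(\textbf{l}, -x) = \sum_{i=0}^{c+1} j_{n,i}(\textbf{l})\,(-1)^{n-i} J_{n-i}(x) = (-1)^n \sum_{i=0}^{c+1} (-1)^i\, j_{n,i}(\textbf{l})\, J_{n-i}(x).
\end{equation*}
The coefficient tuple $\bigl((-1)^i j_{n,i}(\textbf{l})\bigr)_i$ appearing here is, up to the global sign $(-1)^n$ (irrelevant in $\mathbb{J}=\mathbb{RP}^{c+1}$), precisely $r\bigl(f_n(\textbf{l})\bigr)$. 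Note that $Q_n(\textbf{l}, -x)$ is still supported on the same symmetric Jacobi polynomials $J_n, \dots, J_{n-c-1}$, so this coefficient point genuinely lies in $\mathbb{J}$.

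Next I would read off the conclusion from the definition $R_n = f_n^{-1} \circ r \circ f_n$. Setting $\textbf{l}_R = R_n(\textbf{l})$ gives $f_n(\textbf{l}_R) = r(f_n(\textbf{l}))$ by construction, i.e. the $J$-coefficients of $Q_n(\textbf{l}_R, x)$ agree projectively with those of $r(f_n(\textbf{l}))$. Comparing with the display, the expansions of $Q_n(\textbf{l}, -x)$ and $Q_n(\textbf{l}_R, x)$ are proportional, so $Q_n(\textbf{l}, -x) = \alpha\, Q_n(\textbf{l}_R, x)$ for a nonzero scalar $\alpha$ absorbing $(-1)^n$ and the projective normalization; equality of the root sets is then immediate. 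The involution property drops out formally from $r^2 = \mathrm{id}$, since $R_n \circ R_n = f_n^{-1} \circ r \circ (f_n \circ f_n^{-1}) \circ r \circ f_n = f_n^{-1} \circ r^2 \circ f_n = \mathrm{id}$.

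I expect the only real obstacle to be well-definedness rather than the algebra: for $\textbf{l}_R$ to be a genuine element of $\mathbb{D}$ one must know that $r(f_n(\textbf{l}))$ lies in the domain of the rational map $f_n^{-1}$, equivalently that the reflected coefficient point is attained by some left-sided parameter. This is exactly the content borrowed from the conjecture, which asserts that $f_n$ is birational with rational inverse; then $\operatorname{im} f_n$ is dense in $\mathbb{J}$, $f_n^{-1}$ is defined on a dense open set, and since $r$ is a linear automorphism of $\mathbb{J}$ the composition $f_n^{-1} \circ r \circ f_n$ is defined on a dense open subset of $\mathbb{D}$, so $\textbf{l}_R$ exists generically. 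I would stress that the parity computation itself is unconditional; the conjecture is used solely to convert the projective identity of coefficient vectors into an honest parameter $\textbf{l}_R \in \mathbb{D}$.
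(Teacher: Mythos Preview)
Your argument is correct and is precisely the computation the paper has in mind. Note, however, that the paper does not actually supply a proof of this lemma: it states the lemma immediately after Conjecture~\ref{ConjMain} as a direct consequence, and the essential content is already built into Definition~\ref{DefinitionSpaceJ}, where the map $r$ is introduced with the remark that ``the roots of the corresponding polynomials are just reflected at $0$.'' Your proof spells out why that remark is true---the parity $J_m(-x)=(-1)^m J_m(x)$ of the symmetric Jacobi polynomials---and then unwinds the definition $R_n=f_n^{-1}\circ r\circ f_n$; this is exactly the intended (but omitted) verification, so there is no methodological difference to report. Your closing paragraph on well-definedness is also apt: the paper is explicit that the existence and rationality of $f_n^{-1}$ are part of the conjecture, not something proved here.
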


\begin{conjecture}
The homogenized rational reflection map $R_n$ and the rational connection map $c_n$ defined in the the next section are birational involutions and so elements of the Cremona Group $Bir \left( \mathbb{P}_\mathbb{C}^n \right)$, for Cremona groups see e.g. \citet{Des2012}. \\
$R_n$ has has degree $n^2$ and the following form, with $\Gamma_n$ and $N_{n,i}$ polynomials in $n$ and $\textbf{l}$, $\Gamma_n$ raised to the powers $c+1, \dots, 1$ in the denominators:
\begin{equation} R_{n,i} : l_i \mapsto N_{n,i} (\textbf{l}) \, / \, {\Gamma_n^{c+1-i} (\textbf{l})} \qquad \hbox{for} \quad 0 \le i \le n - 1 \label{ReflMapForm}
\end{equation}
The rational connection map $c_n$ is the unique involution of degree $n$ (up to a linear transformation) with $detjac$ the union of $n+1$ lines, each line of multiplicity $n-1$. It has the following form:
\begin{equation} \sigma_{n,i} : x_i \mapsto \prod\limits_{\substack{k=0 \\ k \ne i}}^n x_k \qquad \hbox{for} \quad 0 \le i \le n \label{ConnMapTransForm}
\end{equation}
For $n = 2$ this is the so called Veronese map $\sigma_2 : (x_0 : x_1 : x_2) \dashrightarrow (x_1 x_2 : x_0 x_2 : x_0 x_1)$. For higher $n$ this map $\sigma_n$ can be considered as a higher dimensional generalized Veronese map. 
\end{conjecture}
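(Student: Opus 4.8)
The statement bundles two essentially independent assertions, one for the reflection map $R_n$ and one for the connection map $c_n$, and I would establish them separately. That $R_n$ is a birational involution is almost formal once Conjecture \ref{ConjMain} is granted. The map $r$ of \ref{DefinitionSpaceJ} is linear, being the diagonal sign change $j_i \mapsto (-1)^i j_i$, so $r^2 = \mathrm{id}$; and $f_n$ is birational, since it is polynomial and, by Conjecture \ref{ConjMain}, admits a rational inverse $f_n^{-1}$. Homogenising $\mathbb{D} = \mathbb{R}^{c+1}$ to $\mathbb{P}^{c+1}$, the composite $R_n = f_n^{-1} \circ r \circ f_n$ is a composition of birational maps, hence birational, and
\[ R_n \circ R_n \;=\; f_n^{-1} \circ r \circ (f_n \circ f_n^{-1}) \circ r \circ f_n \;=\; f_n^{-1} \circ r^2 \circ f_n \;=\; \mathrm{id}, \]
so $R_n$ is an involution in $\mathrm{Bir}(\mathbb{P}^{c+1}_{\mathbb{C}})$.

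The substantive claims about $R_n$ are the degree and the explicit shape \eqref{ReflMapForm}. To obtain the form $l_i \mapsto N_{n,i}(\textbf{l})/\Gamma_n^{\,c+1-i}(\textbf{l})$ I would exhibit $f_n^{-1}$ in triangular (staircase) form: writing out the defining relations $j_k = j_{n,k}(\textbf{l})$ and solving for $l_c, l_{c-1}, \dots, l_0$ in that order, each back-substitution introduces one further factor of a single common denominator $\Gamma_n$, which is exactly what produces the decreasing exponents $c+1-i$. The claim that the overall degree is $n^2$ is the delicate point. One would read off $\deg f_n$ and $\deg f_n^{-1}$ from the explicit coefficient polynomials $j_{n,i}(\textbf{l})$ of Conjecture \ref{ConjMain}, bound $\deg R_n$ by their product (since $r$ is linear), and then carry out a base-locus analysis to show that the cancellations are exactly those needed to reach $n^2$. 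I expect this to be the main obstacle: it first requires making precise in which grading the degree is counted (the dependence on the index $n$ is not the same as the $\textbf{l}$-grading), and then either closed forms for the $j_{n,i}$ or a recursion whose degree growth in $n$ can be controlled --- which is where the computer-algebra evidence currently substitutes for a proof.

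For the connection map $c_n = \sigma_n$, $x_i \mapsto \prod_{k \neq i} x_k$, the elementary properties are direct computations. Each component has degree $n$, and no variable $x_j$ divides every component (it is absent only from the $i = j$ one), so the components have no common factor and $\deg \sigma_n = n$. Composing,
\[ \sigma_n^2(x)_i \;=\; \prod_{k \neq i} \,\prod_{j \neq k} x_j \;=\; x_i\,(x_0 \cdots x_n)^{\,n-1}, \]
and cancelling the common factor $(x_0 \cdots x_n)^{n-1}$ returns the identity, so $\sigma_n$ is a birational involution of $\mathbb{P}^n$. For the Jacobian one has $\partial \sigma_{n,i}/\partial x_j = \prod_{k \neq i,j} x_k$ for $j \neq i$ and $0$ for $j = i$; factoring $x_k^{\,n-1}$ out of the determinant leaves a nonzero constant (for $n = 2$ the determinant is $2\,x_0 x_1 x_2$), so $\mathrm{detjac}\,\sigma_n$ equals $(x_0 \cdots x_n)^{n-1}$, i.e. the $n+1$ coordinate hyperplanes each with multiplicity $n-1$, as asserted.

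The one genuinely non-computational point is the \emph{uniqueness} of $\sigma_n$, up to a linear transformation, among degree-$n$ involutions with this Jacobian. The plan is to use the Jacobian data to rigidify the map: a degree-$n$ Cremona involution whose $\mathrm{detjac}$ is $n+1$ hyperplanes of multiplicity $n-1$ has its indeterminacy locus forced onto the pairwise intersections of those hyperplanes; a linear change of coordinates normalises them to the coordinate hyperplanes $\{x_i = 0\}$; the resulting base conditions then force each homogeneous component to be a monomial, and the involution relation pins the map, up to a diagonal scaling and a permutation, to $\sigma_n$. Carrying out this classification --- rather than any explicit calculation --- is where the real work lies on the $c_n$ side, and I would either adapt the known characterisation of monomial (toric) Cremona involutions or argue it directly from the base-locus bookkeeping. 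Identifying the connection map of the next section with $\sigma_n$ then amounts to checking that it is an involution of degree $n$ with the stated Jacobian.
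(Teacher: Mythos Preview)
The statement you are attempting to prove is labelled in the paper as a \emph{conjecture}, not a theorem or lemma, and the paper supplies no proof for it. Appendix~\ref{ConjState} makes the status explicit: the claimed form of $R_n$ is verified by computer algebra only for $c=0,1$ and for a few values of $n$ when $c=2$; for $c\ge 3$ it is genuinely open. So there is no ``paper's own proof'' to compare against --- your proposal is an outline for attacking an open problem, not a reconstruction of an existing argument.

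As a strategy your write-up is largely sensible and honest about where the real difficulties lie. The involutivity of $R_n$ and of $\sigma_n$, and the Jacobian computation for $\sigma_n$, are indeed routine once Conjecture~\ref{ConjMain} is assumed. You correctly flag that the degree claim $\deg R_n = n^2$ and the staircase denominator structure \eqref{ReflMapForm} require explicit control of the $j_{n,i}(\textbf{l})$, which the paper does not provide in closed form for general $c$. One point to watch: the paper's use of the symbol $n$ in this conjecture is overloaded. In the displayed formula \eqref{ConnMapTransForm} the index runs $0\le i\le n$, suggesting $\sigma_n$ lives on $\mathbb{P}^n$, but $c_n$ is by construction a self-map of $\mathbb{J}=\mathbb{RP}^{c+1}$; the appendices confirm that for $c=0$ the map $c_n$ is linear and for $c=1$ it is quadratic, so the relevant ``$n$'' in \eqref{ConnMapTransForm} is really $c+1$, not the polynomial-degree subscript. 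Your final sentence (``identifying the connection map \ldots\ amounts to checking that it is an involution of degree $n$ with the stated Jacobian'') therefore still leaves two nontrivial steps unproved: that $c_n$ has degree $c+1$ for general $c$, and the uniqueness classification you invoke. Neither is supplied by the paper, and both are part of what makes this a conjecture rather than a theorem.
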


The current state of the conjecture \ref{ConjMain} can be found in appendix \ref{ConjState}. For a connection between the conjectured reflection map $R_n$ and Cremona groups see also appendix \ref{ConCremona}.

\section{ Connecting two subintervals of type \textit{Q}, the recursion map \label{SectRecMap} }

Here a so called recursion map $Rec_n$ is defined. This is a map that transforms the vector \textbf{l} of the subinterval $s$ to the vector $Rec_n \, (\textbf{l})$ of the next one $s+1$, so that the quadrature rule is \textit{exact} for all spline functions with a support of these $2$ intervals. So the
overall quadrature rule is exact for \textit{all} splines (with an arbitray support).

\begin{definition}
The following monomial involution $C$ like (C)onnection is defined, this name because it makes that the sum of the defects in a subinterval $s$ and the following $s+1$ is $0$ and the quadrature rule is exact.
\begin{align}
C : \enspace  & \mathbb{D} & \rightarrow & \enspace \mathbb{D} \nonumber \\
& ( l_0, l_1, \dots , l_{c-1}, l_{c} ) & \mapsto & \enspace ( - l_0, + l_1, \dots , - l_{c-1}, + l_{c} ) \label{ConnMap} \\
& l_i   & \mapsto & \enspace (-1)^{i+1} \, l_i \nonumber
\end{align}
With this connection map and the reflection map (\ref{ReflMapR}) we define the recursion map:
\begin{equation} Rec_n  = C \circ R_n = C \circ ( f_n^{-1} \circ r \circ f_n )
\label{RecMap} \end{equation}
This composition of the maps in (\ref{RecMap}) can be visualized by this diagram:
\begin{equation}
\underbrace
{ \begin{CD}
  \mathbb{J}   @>r>>    \mathbb{J}             @.            \\
  @A{f_n}AA             @VV{f_n^{-1}}V         @.            \\
  \mathbb{D}   @.       \mathbb{D}    @>>C>    \mathbb{D}    \nonumber
  \end{CD}
}_{\quad \enspace Rec_n \enspace : \enspace \mathbb{D} \enspace \longrightarrow \enspace \mathbb{D}}
\end{equation}
This recursion map can also be given in the $\mathbb{J}$ domain now named $rec_n$ (with lower case initial letter):
\begin{equation} rec_n  = c_n \circ r = ( f_n \circ C \circ f_n^{-1} ) \circ r \nonumber \end{equation}
\end{definition}

\begin{theorem} \label{TheoremRecMap}
For 2 consecutive uniform subintervals of type $\textbf{Q, Q}$ let the nodes and weights be defined by the polynomials $Q_n (\textbf{l}, x)$ and $Q_m (Rec_n (\textbf{l}), x)$. The so given quadrature rule is exact for splines with a support of 1 or 2 intervals, when the spline degree $d$ is $\le \min (2n + c, 2m + c)$ 

For 2 consecutive uniform subintervals of type $\textbf{Q, M}$ the nodes and weights are defined by the polynomials $Q_n (\textbf{l}, x)$ and $M_m (Rec_n (\textbf{l}), \textbf{r}, x)$. The so given quadrature rule is exact for splines with a support of 1 or 2 intervals, when the spline degree $d$ is $\le \min (2n + c, 2m - 1)$. The vector $\textbf{r}$ is free, so we can add another subinterval of type $\textbf{Q}$ on the right side and determine \textbf{r} so that $\textbf{Q, M, Q}$ is also exact for the second spline at the right with a support of 2 intervals.

For the first subinterval of type $\textbf{Q}$ or $\textbf{M}$ at the left boundary of the interval
the (left) parameter has to be the zero vector, i.e. we have to use $Q_n (\textbf{0}, x)$ or $M_n (\textbf{0}, \textbf{r}, x)$.  
\end{theorem}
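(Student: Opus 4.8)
The plan is to reduce the claim to the two elementary building blocks, splines with support of one interval and splines with support of two intervals, and to show that on each such spline the total defect, i.e.\ the sum of the per-subinterval defects, vanishes. Since the rule on the pair is the union of the two one-subinterval rules and the defect is additive over subintervals, I would write $D_{\mathrm{total}}(S)=D_{I_s}(S|_{I_s})+D_{I_{s+1}}(S|_{I_{s+1}})$, where the two summands are the defects of the rules attached to $Q_n(\textbf{l},x)$ and to $Q_m(Rec_n(\textbf{l}),x)$ (resp.\ $M_m(Rec_n(\textbf{l}),\textbf{r},x)$). For a support-$1$ spline the restriction to each subinterval vanishes to order $c+1$ at both endpoints, so by the closing sentences of Lemma \ref{LemmaFundQ} and Lemma \ref{LemmaFundM} each summand is already zero; this settles the support-$1$ case and leaves the support-$2$ case as the real content.

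For a support-$2$ spline $S$ carried by $I_s\cup I_{s+1}$, I orient both subintervals by the same affine map onto $[-1,+1]$. Then $g_{s+1}:=S|_{I_{s+1}}$ vanishes to order $c+1$ at $+1$ (its outer endpoint), so the hypothesis $(1-x)^{c+1}\mid g_{s+1}$ of Lemma \ref{LemmaFundQ} (resp.\ the degree hypothesis of Lemma \ref{LemmaFundM}) holds, and the right-hand defect is read off directly as $\sum_{i=0}^{c}(-1)^i (Rec_n(\textbf{l}))_i\,g_{s+1}^{(i)}(-1)$, evaluated at the shared node $-1$. The left piece $g_s:=S|_{I_s}$ is the obstruction: it vanishes at $-1$, not at $+1$, so Lemma \ref{LemmaFundQ} does not apply to it. Here I invoke the reflection Lemma of section \ref{SectReflMap} (whence the dependence on Conjecture \ref{ConjMain}): the roots of $Q_n(\textbf{l}_R,x)$ with $\textbf{l}_R=R_n(\textbf{l})$ are the negatives of the roots of $Q_n(\textbf{l},x)$. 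Substituting $h(x)=g_s(-x)$ into the identity of Lemma \ref{LemmaFundQ} for the parameter $\textbf{l}_R$, and using $h^{(i)}(-1)=(-1)^i g_s^{(i)}(+1)$ together with the reflection invariance of $\int_{-1}^{+1}$, the two factors $(-1)^i$ cancel and I obtain $D_{I_s}(g_s)=\sum_{i=0}^{c}(R_n(\textbf{l}))_i\,g_s^{(i)}(+1)$, now expressed through the shared node $+1$.

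It then remains to glue the two contributions by the $C^c$-continuity of $S$ at the shared node: since the subintervals are uniform (equal length, hence equal scaling) the reference derivatives agree, $g_s^{(i)}(+1)=g_{s+1}^{(i)}(-1)=:v_i$ for $0\le i\le c$. Adding the two defects gives
\begin{equation}
D_{\mathrm{total}}(S)=\sum_{i=0}^{c}\big[(R_n(\textbf{l}))_i+(-1)^i (Rec_n(\textbf{l}))_i\big]\,v_i . \nonumber
\end{equation}
Because $Rec_n=C\circ R_n$ with $C:l_i\mapsto(-1)^{i+1}l_i$, each bracket equals $(R_n(\textbf{l}))_i+(-1)^i(-1)^{i+1}(R_n(\textbf{l}))_i=(R_n(\textbf{l}))_i-(R_n(\textbf{l}))_i=0$, so $D_{\mathrm{total}}(S)=0$; this is exactly where the connection map $C$ earns its name. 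The degree condition is the conjunction of the two degree hypotheses: Lemma \ref{LemmaFundQ} on the right piece needs $\deg\le 2m+c$ and the reflected Lemma \ref{LemmaFundQ} on the left piece needs $\deg\le 2n+c$, giving $d\le\min(2n+c,2m+c)$. For the $\textbf{Q},\textbf{M}$ pair the right subinterval uses Lemma \ref{LemmaFundM} instead: its defect carries the extra term $\sum_{i=0}^{c} r_i\,g_{s+1}^{(i)}(+1)$, which vanishes because $g_{s+1}$ vanishes at its outer endpoint $+1$; hence $\textbf{r}$ is unconstrained by this spline and is free to be fixed by the next pair on the right, where (the $M$-defect needing no reflection) the $\textbf{r}$-term plays the role of the shared-node coupling and forces the following left vector to be $C(\textbf{r})$, which is the asserted $\textbf{Q},\textbf{M},\textbf{Q}$ exactness; the degree bound becomes $\min(2n+c,2m-1)$. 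Finally, at the left boundary of $[a,b]$ there is no subinterval on the left to supply a cancelling defect, so the weight must carry no Dirac mass there: the recursion is initialised with the decoupled value $\textbf{l}=\textbf{0}$, reducing the first polynomial to $Q_n(\textbf{0},x)=P_n^{(c+1,0)}(x)$.

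The step I expect to be delicate, beyond its reliance on the unproven Conjecture \ref{ConjMain}, is the reflection computation for the left piece. One must check not only the sign bookkeeping (that the two occurrences of $(-1)^i$ cancel, so that $R_n(\textbf{l})$ enters \emph{without} an alternating sign, in contrast to the $(-1)^i$ attached to $Rec_n(\textbf{l})$ on the right) but also that the weights actually deployed on the left subinterval are the reflected weights, i.e.\ that the modified weights (\ref{Weight_iQ}) are invariant under the simultaneous reflection $x\mapsto-x$, $\textbf{l}\mapsto R_n(\textbf{l})$. This weight symmetry is subtle because the factor $(1-x_i)^{c+1}$ in (\ref{Weight_iQ}) is not reflection-symmetric and because $R_n$ depends on the degree $n$, so the degree-$(n-1)$ objects in (\ref{Weight_iQ}) are not transported by the same map; that the resulting recursion nevertheless closes up as $Rec_n=C\circ R_n$ is the structural reason the cancellation works, and verifying this invariance carefully is where I would concentrate the effort.
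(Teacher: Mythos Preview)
Your proof is correct and follows essentially the same route as the paper: reduce to support-$1$ and support-$2$ splines, read off the right-piece defect directly from Lemma~\ref{LemmaFundQ} (resp.\ \ref{LemmaFundM}), obtain the left-piece defect via the reflection $\textbf{l}\mapsto R_n(\textbf{l})$, match derivatives at the shared knot by $C^c$-continuity, and cancel using $Rec_n=C\circ R_n$. You are in fact more explicit than the paper in flagging the weight-invariance under reflection as the delicate step; the paper simply asserts that ``the nodes and weights of the corresponding quadrature rule are reflected too'' and proceeds, without checking that the reflected weights coincide with those produced by formula~(\ref{Weight_iQ}) for the parameter $\textbf{l}_R$.
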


\begin{proof}
I repeat here (\ref{DefectSplineQ}) for the defect
\begin{equation} D (g) = \sum_{i=0}^{c}  (-1)^i l_i \, g^{(i)} (-1) \nonumber \end{equation}
under the condition (\ref{DefectCondQ}) that $g (x)$ is the right side of a spline i.e. $g^{(k)} (+1) = 0$ for $0 \le k \le c$

The condition (\ref{DefectCondQ}) is fulfilled for the \textit{right} side $g_r (x)$ of a spline with a support of $2$ intervals, we get for the defect of this side:
\begin{equation} D (g_r) = \sum_{i=0}^{c}  (-1)^i l_i \, g_r^{(i)} (-1) \nonumber \end{equation}

Now we want to apply this (\ref{DefectSplineQ}) to get the defect of the \textit{left} side $g_l (x)$ of a spline with support of $2$ intervals and to determine the still unknown recursion map $Rec_n$ for which the sum of the left and right defects is $0$. At first we reflect the $Q_n (\textbf{l}, x)$ and $g_l (x)$ at $0$. As a result the nodes and weights of the corresponding quadrature rule are reflected too. This does not change the defect. The parameter $\textbf{l}_R$ of this reflected $Q \, ()$ is given by (\ref{ReflMapR}), the reflected $g_l (x)$ fulfills the condition (\ref{DefectCondQ}), the reflection multiplies the odd derivations $g_l^{(i)} (+1)$ by $-1$. So the defect for this side is:
\begin{equation} D (g_l) = \sum_{i=0}^{c} l_{R,i} \, g_l^{(i)} (+1) \nonumber \end{equation}
where $l_{R,i}$ is the $i^{th}$ component of the vector $\textbf{l}_R$

Because the 2 interval spline $g_l (x), g_r (x)$ is of continuity class $c$:
\begin{equation}  g_l^{(i)} (+1) = g_r^{(i)} (-1) \label{CondSplineCont} \end{equation}
The sum of the defects of both sides of the spline is with the condition above:
$$ D (g_l) + D (g_r) = 0 = \sum_{i=0}^{c} \left( l_{R,i} + (-1)^i l_{Rec,i} \right) g_l^{(i)} (-1) $$
where $l_{Rec,i}$ is the $i^{th}$ component of the vector $Rec_n (\textbf{l})$ belonging the next subinterval. Because this equation has to be valid for arbitrary $g_l^{(i)} (-1)$ we have for all $0 \le i \le c$: 
\begin{equation} 0 =  l_{R,i} + (-1)^i l_{Rec,i} \qquad l_{Rec,i} = (-1)^{i+1} l_{R,i}
\nonumber \end{equation}
which proofs the theorem. \\

For the first subinterval the defect of only the right side of a spline with support of 2 subintervals
has to be zero because on the boundary there are no imposed conditions on continuity. So all coefficients
$l_i$ in  (\ref{DefectSplineQ}) or (\ref{DefectSplineM}) have to be $0$. \\

The proof for 2 consecutive uniform subintervals of type $\textbf{Q, M}$ works in the same manner.
The defect (\ref{DefectSplineM}) for the right side of a spline with a support of 2 subintervals
(with the property $g^{(i)} (+1) = 0$) of a subinterval of type $\textbf{M}$ is equal to the defect (\ref{DefectSplineQ}) of a subinterval of type $\textbf{Q}$.
\end{proof}

\section{ Non-uniform subintervals and stretching \label{SectStretch} }

To treat non-uniform subintervals an extended recursion map allowing stretched intervals is given.

\begin{definition}
\label{DefStrech}
The stretching factor $\lambda$ for a subinterval $s$ is the ratio of the subsequent interval lengths $L_i$: $\lambda = L_{s+1} / L_s$. The monomial stretching map $S_\lambda$ like (S)tretching is defined as:
\begin{align}
S_\lambda : \enspace  & \mathbb{D} & \rightarrow & \enspace \mathbb{D} \nonumber \\
&           ( l_0, \dots , l_{c} ) & \mapsto & \enspace ( l_0 \, / \, \lambda^{1}, \dots , l_{c} \, / \,  \lambda^{c+1} ) \label{StretchMap} \\
& l_i   & \mapsto & \enspace l_i \, / \, \lambda^{i+1}  \nonumber
\end{align}
With this stretching map and the recursion map (\ref{RecMap}) we define the recursion map with
stretching as:
\begin{equation} RecS_{n,\lambda}  = S_\lambda \circ Rec_n = S_\lambda \circ C \circ ( f_n^{-1} \circ r \circ f_n ) \label{RecSMap} \end{equation}
This composition of the maps in (\ref{RecSMap}) can be visualized by this diagram:
\begin{equation}
\underbrace
{ \begin{CD}
  \mathbb{J}   @>r>>    \mathbb{J}             @.                         @.          \\
  @A{f_n}AA             @VV{f_n^{-1}}V         @.                         @.          \\
  \mathbb{D}   @.       \mathbb{D}    @>>C>    \mathbb{D}  @>>S_\lambda>  \mathbb{D}  \nonumber
  \end{CD}
}_{\quad RecS_{n,\lambda} \enspace : \enspace \mathbb{D} \enspace \longrightarrow \enspace \mathbb{D}}
\end{equation}
This recursion map can also be given in the $\mathbb{J}$ domain now named $recs_{n,\lambda}$:
\begin{equation} recs_{n,\lambda}  = s_{n,\lambda} \circ rec_n = ( f_n \circ S_{\lambda} \circ C \circ f_n^{-1} ) \circ r \nonumber \end{equation}
\end{definition}

\begin{theorem}  \label{TheoremRecMapStretch}
For 2 consecutive non-uniform subintervals of type $\textbf{Q, Q}$ let the nodes and weights be defined by the polynomials $Q_n (\textbf{l}, x)$ and $Q_m (RecS_{n,\lambda} (\textbf{l}), x)$. The so given quadrature rule is exact for splines with a support of 1 or 2 intervals, when the spline degree $d$ is $\le \min (2n + c, 2m + c)$

For 2 consecutive non-uniform subintervals of type $\textbf{Q, M}$ the nodes and weights are defined by the polynomials $Q_n (\textbf{l}, x)$ and $M_m (RecS_{n,\lambda} (\textbf{l}), \textbf{r}, x)$. The so given quadrature rule is exact for splines with a support of 1 or 2 intervals, when the spline degree $d$ is $\le \min (2n + c, 2m - 1)$.
\end{theorem}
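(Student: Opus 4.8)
The plan is to reduce Theorem~\ref{TheoremRecMapStretch} to the already-proved uniform statement (Theorem~\ref{TheoremRecMap}) by a change of variables between the standard interval $[-1,+1]$, on which every $Q_n$ and $M_m$ lives, and the two physical subintervals of lengths $L_s$ and $L_{s+1}$. Since the polynomials, nodes and weights are defined once and for all on $[-1,+1]$, exactness for a spline supported on a \emph{single} physical subinterval follows immediately: a linear change of variables maps it to a degree-preserving polynomial on $[-1,+1]$, so Lemma~\ref{LemmaFundQ} (resp.\ Lemma~\ref{LemmaFundM}) applies verbatim and the degree bounds $2n+c$ (resp.\ $2m-1$) are unaffected. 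The only genuinely new phenomenon is therefore the matching across the shared boundary of two subintervals of \emph{unequal} length, and the whole content of the theorem is that the factor $\lambda=L_{s+1}/L_s$ of Definition~\ref{DefStrech} enters this matching exactly through the stretching map $S_\lambda$ of~(\ref{StretchMap}).

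First I would fix the linear maps $t=t_s+\tfrac{L_s}{2}(x+1)$ on the left subinterval and its analogue on the right, and record two bookkeeping facts. (i)~By the chain rule, a standard-coordinate derivative relates to the physical one by $\tilde g^{(i)}=(L/2)^i\,g^{(i)}_{\mathrm{phys}}$, so the $C^c$-continuity~(\ref{CondSplineCont}) of the physical spline at the common node becomes, in standard coordinates,
\begin{equation}
\tilde g_r^{(i)}(-1)=\lambda^{\,i}\,\tilde g_l^{(i)}(+1),\qquad 0\le i\le c. \nonumber
\end{equation}
(ii)~The physical integral and the physical weights over a subinterval of length $L$ are both $L/2$ times their standard counterparts, so the physical defect of each side is $L/2$ times the standard defect obtained from~(\ref{DefectSplineQ}).

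Next I would repeat the defect cancellation of Theorem~\ref{TheoremRecMap} with these two factors inserted. Writing $\mathbf l_R=R_n(\mathbf l)$ from~(\ref{ReflMapR}) for the reflected left piece and $\mathbf l'=RecS_{n,\lambda}(\mathbf l)$ for the right piece, the total physical defect of the two-interval spline becomes
\begin{equation}
\frac{L_s}{2}\sum_{i=0}^{c} l_{R,i}\,\tilde g_l^{(i)}(+1)
+\frac{L_{s+1}}{2}\sum_{i=0}^{c}(-1)^i l'_i\,\tilde g_r^{(i)}(-1)
=\frac{L_s}{2}\sum_{i=0}^{c}\Bigl(l_{R,i}+(-1)^i\lambda^{\,i+1}l'_i\Bigr)\tilde g_l^{(i)}(+1), \nonumber
\end{equation}
after substituting the continuity relation~(i) and $L_{s+1}/L_s=\lambda$. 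Demanding that this vanish for arbitrary boundary data $\tilde g_l^{(i)}(+1)$ forces $l'_i=(-1)^{i+1}\lambda^{-(i+1)}l_{R,i}$, which is precisely $\mathbf l'=S_\lambda\bigl(C(\mathbf l_R)\bigr)=S_\lambda\circ C\circ R_n(\mathbf l)=RecS_{n,\lambda}(\mathbf l)$ by~(\ref{StretchMap}),~(\ref{ConnMap}) and~(\ref{RecSMap}). For the $\mathbf{Q,M}$ case I would invoke, exactly as in the uniform proof, that the $M$-defect~(\ref{DefectSplineM}) restricted to the right half of a two-interval spline (where $g^{(i)}(+1)=0$) coincides with the $Q$-defect~(\ref{DefectSplineQ}), so the identical computation applies and yields $d\le\min(2n+c,2m-1)$.

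The step I expect to be the crux is the clean separation and recombination of the two independent sources of $\lambda$: the factor $\lambda^{i}$ produced by the chain rule in the continuity condition~(i), and the single extra factor $\lambda=L_{s+1}/L_s$ produced by the ratio of the integral and weight scalings~(ii). It is their product $\lambda^{i+1}$ that must match the exponent in the stretching map $l_i\mapsto l_i/\lambda^{i+1}$; obtaining both contributions, with no spurious extra power, is the only place where the argument genuinely differs from the uniform Theorem~\ref{TheoremRecMap}, and it is what pins down $S_\lambda$ uniquely. Everything else---single-interval exactness, the reflection handling, and the degree counts---is inherited unchanged from the uniform case.
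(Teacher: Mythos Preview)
Your proposal is correct and follows essentially the same route as the paper's own proof: both identify the two modifications relative to the uniform case---the factor $\lambda^{i}$ in the continuity condition coming from the chain rule, and the single extra factor $\lambda$ coming from the rescaled defect/integral---combine them into $\lambda^{i+1}$, and read off $l'_i=(-1)^{i+1}l_{R,i}/\lambda^{i+1}=S_\lambda\circ C\circ R_n(\mathbf l)_i$. The only cosmetic difference is that you frame the computation via physical subintervals of lengths $L_s,L_{s+1}$ and then pull back, whereas the paper stays on $[-1,+1]$ and speaks of the ``stretched'' right half $g_\lambda$; the algebra and the conclusion are identical.
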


\begin{proof}
The proof of the theorem above with the extended recursion map $RecS_{n,\lambda}$ now works similar to the proof in the previous section:

There are 2 differences, the first: the right side of a spline with a support of $2$ intervals is now the stretched spline $g_\lambda (x)$. $g_r (x)$ is now $g_\lambda (x)$ scaled in x direction to the default interval length of $2$.

The second difference: because the $g_l (x)$ and now the stretched $g_\lambda (x)$ are continuous at the common interval ends condition (\ref{CondSplineCont}) is different and a factor $\lambda^{i}$ appears:
\begin{equation}  \lambda^{i} \, g_l^{(i)} (+1) = g_r^{(i)} (-1) \nonumber \end{equation}
The sum of the defects of both sides of the spline is because the defect of a stretched interval acquires the factor $\lambda$:
$$ D (g_l) + \lambda \, D (g_r) = 0 = \sum_{i=0}^{c} \left( l_{R,i} + \lambda^{i+1} \, (-1)^i l_{RecS,i} \right) g_l^{(i)} (-1) $$
\begin{equation} 0 =  l_{R,i} + \lambda^{i+1} \, (-1)^i l_{RecS,i} \qquad l_{RecS,i} = (-1)^{i+1} l_{R,i} / \lambda^{i+1} = l_{Rec,i} / \lambda^{i+1} \nonumber \end{equation}
\end{proof}

\section{The fields for the nodes and weights \label{SectFields}}

Let $\mathbb{Q}_K$ be the field obtained by adjoining all knot positions to $\mathbb{Q}$ and $\mathbb{Q}_S$ be the field obtained by adjoining all stretching factors (see definition \ref{DefStrech}) to $\mathbb{Q}$. In the case of 1-parameter suboptimal quadrature rules adjoin also the free parameter $\omega$ to $\mathbb{Q}_S$. \\

For the quadrature rules with continuity class $c = 0$, even degree and $c = 1$, odd degree treated in this article we get the following fields for the nodes and weights:
\begin{equation} x_i \in \mathbb{Q}_K + \mathbb{Q}_K \, \mathbb{Q}_S (r_i) \qquad
                 w_i \in \mathbb{Q}_K \, \mathbb{Q}_S (r_i)
\label{Fields1} \end{equation}
$\mathbb{Q}_S (r_i)$ is the field obtained by adjoining the root $r_i$ to $\mathbb{Q}_S$. $r_i$ is the root of a Q or M polynomial belonging to the index $i$. The coefficients of this polynomial are in $\mathbb{Q}_S$. \\

For the remaining cases $c = 0$, odd degree and $c = 1$ even degree the so call "1/2-rules" are needed. These will be treated in an upcoming article. Here I present just the fields for these cases. For $c = 0$, odd degree these are the same fields as above in (\ref{Fields1}). \\

For $c = 1$ even degree i.e. "1/2-rules" the situation is different. A square root has to be adjoined for each 2 subsequent subintervals with different numbers of nodes. \\

Define the following tower of field extensions by square roots, the field $K_k$ is an extension of $\mathbb{Q}_S$ with degree $2^k$. $K (\sqrt{K})$ means the extension of $K$ with the square root of an element in $K$:
\begin{equation}
\mathbb{Q}_S = K_0 \subset K_0 (\sqrt{K_0}) = K_1 \subset \dots \subset K_{k-1} (\sqrt{K_{k-1}}) = K_k 
\label{Tower} \end{equation}
Let $l$ be half of the number of subintervals left of the "subinterval in the middle" i.e. the number of 2-tuples of subsequent subintervals with different numbers of nodes. Let $r$ be half of the number of subintervals right of \dots. Then we have the following fields:
\begin{align} \begin{split}
 x_i & \in \mathbb{Q}_K + \mathbb{Q}_K \, K_l (r_i)        \qquad \; \; \; w_i \in \mathbb{Q}_K \, K_l (r_i)
   \qquad \; \; \; \hbox{left of the middle} \\
 x_i & \in \mathbb{Q}_K + \mathbb{Q}_K \, K_l (K_r) (r_i)  \quad  w_i  \in \mathbb{Q}_K \, K_l (K_r) (r_i)
   \quad \hbox{the middle}         \\
 x_i & \in \mathbb{Q}_K + \mathbb{Q}_K \, K_r (r_i)        \qquad \; \; \; w_i \in \mathbb{Q}_K \, K_r (r_i)
   \qquad \; \; \; \hbox{right of the middle}
\label{Fieldsc1_1_2}
\end{split} \end{align}
$K_l (r_i), K_l (K_r) (r_i), K_l (r_i)$ are the fields obtained by adjoining the root $r_i$ to $K_l, K_l (K_r), K_r$. $r_i$ is the root of a Q or M polynomial belonging to the index $i$. The coefficients of this polynomial are in $K_l, K_l (K_r), K_r$. The field $K_l (K_r) = K_r (K_l)$ has degree $2^{\lfloor s/2 \rfloor}$ over $\mathbb{Q}_S$, $s$ is the number of subintervals.

\section{Conclusions}

With this approach, many of the questions that arose in the concluding remarks of section 5 in  \citet{Ait2020} could be answered. Even if the conjecture in section \ref{SectReflMap} is not proven for general continuity class, with the help of a computer algebra system it should be possible to determine the recursion map symbolically for the continuity classes $c = 2, 3$. This could answer questions about the existence of optimal uniform quadrature rules on the real line in these cases. I find two particularly interesting questions: For uniform optimal rules in the closed interval and $c = 3$, is the convergence quadratic for all $n$ as it is the case for $c = 1$? What about the fields (see previous section) for the 
$c = 2, 3$ "1/2-rules"? \newline

\noindent \textbf{\large Appendices}

\appendix

\section{The current state of the conjecture 4.1 \label{ConjState}}

\begin{itemize}
   \item for continuity class $c \ge 3$: \\
         Formulae (\ref{QasSumJ}) and (\ref{ReflMapForm}) are conjectured.
   \item for continuity class $c = 2$: \\
         Formulae (\ref{QasSumJ}) and (\ref{ReflMapForm}) were proven by a computer algebra system = CAS
         for \textit{certain} values of the degree $n$ up to $n = 8$. But the reflection map was not
         calculated for a general $n$ as \textit{variable} as e.g. in appendix \ref{c1Formulae}.
   \item for continuity class $c = 1$: \\
         Formulae (\ref{QasSumJ}) and (\ref{ReflMapForm}) are proven by a CAS.
   \item for continuity class $c = 0$: \\
         In \citet{Chihara1985} a proof of (\ref{QasSumJ}) can be found, because the
         map is linear fractional in this case formula (\ref{ReflMapForm}) follows. Of course this can
         also be proven by a CAS.
\end{itemize}

\section{A connection between the reflection map $R_n$ and Cremona groups \label{ConCremona}}

The map $R_n$ is birational and homogenized it is a group element in the Cremona group $Bir \left( \mathbb{P}_\mathbb{C}^{c+1} \right)$, the group of birational maps in the projective space $\mathbb{CP}^{c+1}$, see \citet{Des2012}. $R_n$ depends on the parameter $n \in \mathbb{N}_+$. Because $R_n$ is rational in $n$ we can take also $n \in \mathbb{R} \setminus \, \{ \, \dots \, \}$. So we get a 1-parameter involution in the Cremona group. The connection map $c_n$ also represents a 1-parameter involution in the Cremona group.  

Maybe this kind of involutions of the form (\ref{ReflMapForm}) or the birational maps $f_n$ (\ref{MapTrans}) are known in the theory of Cremona groups. Then we could find something about the conjecture \ref{ConjMain}.

If this kind of involution is not known, maybe this 1-parameter $R_a$ in $a \in \mathbb{R}$ is interesting in the theory of Cremona groups.

The explicit formulae for the involution $R_n$ in $Bir \left( \mathbb{P}_\mathbb{C}^{1} \right)$ you can find in appendix \ref{c0Formulae} and  for $R_n$ in $Bir \left( \mathbb{P}_\mathbb{C}^{2} \right)$ in  appendix \ref{c1Formulae}.

\section{The formulae for continuity class $c=0$ \label{c0Formulae}}

In this case the vector space $\mathbb{D}$ and the projective space $\mathbb{J}$ are 1-dimensional.
Let $\textbf{l} = (l_0), \textbf{r} = (r_0) \in \mathbb{D}$ and $\textbf{j} = (j_0 : j_1) \in \mathbb{J}$.

Here we express the $Q_n (\textbf{l}, x)$ still as a sum over orthogonal Gegenbauer polynomials and not as in (\ref{QasSumJ}) as sum over the orthogonal symmetric Jacobi polynomials. But these polynomials differ just by a factor depending only on $n$: $P_n^{(c+1,c+1)}(x) = \alpha (n) \, C_n^{(3/2+c)}(x)$.

The Gegenbauer polynomials are used, because when these formulae were created, the connection of quadrature rules for splines to the Jacobi polynomials was unknown to me. Gegenbauer polynomials with negative index are defined as $0$.
 
The one-sided semi-classical orthogonal Jacobi polynomial $Q_n (\textbf{l}, x)$:
$$ F (n) = 1 + n (n + 1) / 2 \, l_0 $$
\begin{equation}
Q_n (\textbf{l}, x) = \frac{F (n) \, C_{n}^{(3/2)} (x) + F (n + 1) \, C_{n-1}^{(3/2)} (x)} {n+1}
\label{c0Q} \end{equation}
Via formula (\ref{QasSumJ}) this defines the maps $f_n$ and $f_n^{-1}$, see (\ref{MapTrans}) and (\ref{MapTransInv}).

\begin{equation}
w_i = \frac{ 2 (2 n + 1) F^2 (n) }{ n (n + 1) Q_n' (\textbf{l}, x_i) \, Q_{n-1} (\textbf{l}, x_i) (1 - x_i) }
\label{c0Qw} \end{equation}

The (linear fractional in $\textbf{l}$) recursion map $Rec_n \, (\textbf{l})$ without stretching:
$$ \Gamma (n) = (n + 1)^2 (1 + n (n + 2) / 2 \, l_0) $$
\begin{equation} l_0 \mapsto - \, l_0 + \frac{ 2 \, F (n) F (n + 1) } { \Gamma (n) }
                     \enspace = \frac{ 2 + (n + 1)^2 \, l_0 } { \Gamma (n) }
                     \enspace \hbox{linear fractional in} \enspace l_0 
\label{c0Rec} \end{equation}
This map and the reflection map $R_n = C \circ Rec_n$, for $C$ see (\ref{ConnMap}), are not only defined for $n \in \mathbb{N}_+$ but for $n \in \mathbb{R} \, \setminus \, \{  \, -1 \, \}$  \newline

The two-sided semi-classical orthogonal Jacobi polynomial $M_n (\textbf{l}, \textbf{r}, x)$:

$$ H (n) = 1 + n^2 / 2 (l_0 + r_0 + (n - 1) (n + 1) / 2 \, l_0 r_0) $$
\begin{equation}
M_n (\textbf{l}, \textbf{r}, x) = \frac{H (n) \, C_{n}^{(3/2)} (x) - H (n + 1) \, C_{n-2}^{(3/2)} (x)} {2 n+1} + \frac{(l_0 - r_0) \, C_{n-1}^{(3/2)} (x)} {2}
\label{c0M} \end{equation}
\begin{equation}
M_{n,\omega} (\textbf{l}, \textbf{r}, x) = M_n (\textbf{l}, \textbf{r}, x) + \omega M_{n-1} (\textbf{l}, \textbf{r}, x)
\label{c0Momega} \end{equation}
\begin{equation}
w_i = \frac{ 2 \, H^2 (n) }{ n \, M_{n,\omega}' (\textbf{l}, \textbf{r}, x_i) \, M_{n-1} (\textbf{l}, \textbf{r}, x_i) }
\label{c0Mw} \end{equation} \newline

In \cite{Ait2020}, section 3.2 for even $n$ a positive, optimal and 1-periodic quadrature rule for the real line is presented. It is left the reader as exercise to derive this quadrature rule with the formulae given here. In a first step get the fixed point $\textbf{l}_F$ of the recursion map and then calculate the nodes as roots of $Q_n (\textbf{l}_F, x)$  and the weights with the formula above. \\

The connection map $c_n \, (\textbf{j})$, the transformed $C$, see (\ref{ConnMap}) is a linear, degree 1 involution in the projective space $\mathbb{J}$, i.e. an element of $Bir \left( \mathbb{P}_\mathbb{C} \right)$:
\begin{align} \begin{split}
 & j_0 \mapsto (n + 1) \, j_0 -       n \, j_1 \\
 & j_1 \mapsto (n + 2) \, j_0 - (n + 1) \, j_1 \nonumber
\end{split} \end{align}

\section{The formulae for continuity class $c=1$ \label{c1Formulae}}

In this case the vector space $\mathbb{D}$ and the projective space $\mathbb{J}$ are 2-dimensional.
Let $\textbf{l} = (l_0, l_1), \textbf{r} = (r_0, r_1) \in \mathbb{D}$ and $\textbf{j} = (j_0 : j_1 : j_2) \in \mathbb{J}$.

The one-sided semi-classical orthogonal Jacobi polynomial $Q_n (\textbf{l}, x)$:
$$ E (n) = 1 + (n + 1) (n + 2) (l_0 + 3 n (n + 3) (2 - (n - 1) (n + 1) (n + 2) (n + 4) \, l_1 )  \, l_1) $$
$$ F (n) = 1 + n (n + 2) (l_0 + 6 (n^2 + 2 n - 1) \, l_1 - 3 (n - 1) n (n + 1)^2 (n + 2) (n + 3) \, l_1^2) $$
$$ Q_n (\textbf{l}, x) =
   \frac{6 \, F (n)   \, C_{n}^{(5/2)} (x)} {(n + 2) (2 n + 3)}
 + \frac{6 \, E (n)   \, C_{n-1}^{(5/2)} (x)} {(n + 1) (n + 2)}
 + \frac{6 \, F (n+1) \, C_{n-2}^{(5/2)} (x)} {(n + 1) (2 n + 3)} $$
Via formula (\ref{QasSumJ}) this defines the maps $f_n$ and $f_n^{-1}$, see (\ref{MapTrans}) and (\ref{MapTransInv}).

$$ w_i = \frac{ 8 (n + 1) F^2 (n) }{ n (n + 2) Q_n' (\textbf{l}, x_i) \, Q_{n-1} (\textbf{l}, x_i) (1 - x_i)^2 } $$

The recursion map $Rec_n \, (\textbf{l})$ without stretching:
\begin{align} \begin{split}
G_0 (n) = & 4 \, (2 n^2 + 6 n + 3) + n (n + 3) \, ( (11 n^2 + 33 n + 16) \, l_0 \\
 & \quad + 24 \, (2 n^4 + 12 n^3 + 17 n^2 - 3 n - 4) \, l_1 \\
 & \quad - 3 n (n + 1) (n + 2) (n + 3) (4 \, (n + 1) (n + 2) (2 n^2 + 6 n - 5) \, l_1^2 \\
 & \qquad   + 3 \, (n^2 - 1) n (n + 2) (n + 3) (n + 4) \, l_0 l_1^2 \\
 & \qquad   - 6 \, (n^2 + 3 n - 2) \, l_0 l_1  - l_0^2 ) ) \nonumber
\end{split} \end{align}
\begin{align} \begin{split}
G_1 (n) & = 1 - 3 n (n + 1) (n + 2) (n + 3) \, l_1 \nonumber 
\end{split} \end{align}
\begin{align} \begin{split}
\Gamma (n) =  (n + 1) (n + 2) (& 1 + n (n + 3) \, l_0 + 6 n (n + 3) (n^2 + 3 n - 1) \, l_1 \\
 & - 3 (n^2 - 1) n^2 (n + 2) (n + 3)^2 (n + 4) \, l_1^2) \nonumber
\end{split} \end{align}
\begin{align} \begin{split}
 & l_0 \mapsto   - \, l_0 + \frac{ E (n) \, G_0 (n) } { 3 \, \Gamma (n)^2 } \\
 & l_1 \mapsto  \quad l_1 + \frac{ E (n) \, G_1 (n) } { 3 (n + 1) (n + 2) \, \Gamma (n) } \nonumber
\end{split} \end{align}
This map and the reflection map $R_n = C \circ Rec_n$, for $C$ see (\ref{ConnMap}), are not only defined for $n \in \mathbb{N}_+$ but for $n \in \mathbb{R} \, \setminus \, \{  \, -1, -2 \, \}$.
The homogenized reflection map $R_n$, the transformed $r$, see (\ref{ReflMap}) is a quartic, degree 4 involution in the projective space $\mathbb{RP}^2$, i.e. an element of $Bir \left( \mathbb{P}_\mathbb{C}^2 \right)$: \\

The two-sided semi-classical orthogonal Jacobi polynomial $M_n (\textbf{l}, \textbf{r}, x)$:

$$ H_a (n, \textbf{d}) = 1 + (n - 1) n (d_0 + (n - 2) (n + 1) (6 - 3 (n - 3) (n - 1) n (n + 2) \, d_1) \, d_1) $$
\begin{align} \begin{split}
H (n) = \, & (  H_a (n, \textbf{l}) H_a (n + 1, \textbf{r})
              + H_a (n, \textbf{r}) H_a (n + 1, \textbf{l})) / 2 \\
           & - 36 \, (n^2 - 1) n^2 (l_1 - r_1)^2
\nonumber \end{split} \end{align}
$$ J_0 (n, \textbf{d}) = 1 + (n^2 + n + 3) \, d_0 + 6 \, (n^4 + 2 n^3 + n^2 + 6) \, d_1
                         - 3 \, (n^2 - 9) (n^2 - 4) (n^2 - 1) n (n + 4) \, d_1^2  $$
$$ J_1 (n, \textbf{d}) = 1 + n (n + 1) (d_0 + 3 \, (n - 1) (n + 2) (2 - (n - 2) n (n + 1) (n + 3) \, d_1) \, d_1) $$
\begin{align} \begin{split}
J (n) = \, & (J_0 (n, \textbf{l}) J_1 (n, \textbf{r}) + J_0 (n, \textbf{r}) J_1 (n, \textbf{l})) / 2 \\
           & + 108 \, (n^2 - 1) n (n + 2) (l_1 - r_1)^2
\nonumber \end{split} \end{align}
\begin{align} \begin{split}
D (n)   & = (l_0 - r_0) \, (2 - 3 \, (n^2 - 1) n (n + 2) \, (l_1 + r_1)) \\
D_1 (n) & = D (n) \, (2 - 3 \,  (n - 2) (n^2 - 1) n \, (l_1 + r_1))       \\
D_3 (n) & = D (n) \, (2 - 3 \, n (n + 1) (n + 2) (n + 3) \, (l_1 + r_1))
\nonumber \end{split} \end{align}
\begin{align} \begin{split}
F_{13} (n) = 3 \, (l_1 - r_1) \, n^2 (& \quad 16 + 4 \, (n^2 - 1) (l_0 + r_0 - 4 \, (n^2 - 1) (l_1 + r_1)) \\
                                      & - 3 \, (n^2 - 4) (n^2 - 1)^2 \, (3 \, l_0 r_1 + 3 \, l_1 r_0 +
                                                                             l_0 l_1 + r_0 r_1 \\
                                      & \qquad \qquad \qquad \qquad \qquad + 16 \, (n^2 - 6) \, l_1 r_1 ) )
\nonumber \end{split} \end{align}
\begin{align} \begin{split}
M_n (\textbf{l}, \textbf{r}, x)  = & \frac{3 \, H (n)     \, C_{n}^{(5/2)}   (x)} {(2 n + 1) (2 n + 3)}
                                   - \frac{6 \, J (n)     \, C_{n-2}^{(5/2)} (x)} {(2 n - 1) (2 n + 3)}
                                   + \frac{3 \, H (n + 1) \, C_{n-4}^{(5/2)} (x)} {(2 n - 1) (2 n + 1)} \\
                      & + \frac{3}{4} \, \frac{  (D_1 (n) + F_{13} (n))    \, C_{n-1}^{(5/2)} (x)
                                               - (D_3 (n) + F_{13} (n+1)) \, C_{n-3}^{(5/2)} (x)}{2 n + 1} 
\nonumber \end{split} \end{align}
\begin{equation}
w_i = \frac{ 2 \, H^2 (n) }{ n \, M_n' (\textbf{l}, \textbf{r}, x_i) \, M_{n-1} (\textbf{l}, \textbf{r}, x_i) }
\nonumber \end{equation} \\

The connection map $c_n \, (\textbf{j})$, the transformed $C_n$, see (\ref{ConnMap}) is a quadratic, degree 2 involution in the projective space $\mathbb{J}$, i.e. an element of $Bir \left( \mathbb{P}_\mathbb{C}^2 \right)$:
\begin{align} \begin{split}
  B      (n) = \enspace  & - (n + 3) \, j_0 + n         \, j_2 \\
  \Delta (n) = \enspace  & \quad (n + 3) (2 n^4 + 18 n^3 + 49 n^2 + 48 n + 18) \, j_0^2 + n^2 (n + 3) (2 n^2 + 6 n + 1) \, j_1^2  \\
                         & + n^2 (n - 1) (2 n^2 + 2 n - 3) \, j_2^2 - 2 n (n + 2) (n + 3) (2 n^2 + 8 n + 3) \, j_0 \, j_1 \\
                         & + 2 n (2 n^4 + 12 n^3 + 25 n^2 + 15 n - 9) \, j_0 \, j_2 - 2 n^2 (n + 1) (2 n^2 + 4 n - 3) \, j_1 \, j_2 \nonumber
\end{split} \end{align}
\begin{align} \begin{split}
 & j_0 \mapsto n \, \Delta \\
 & j_1 \mapsto (2 n + 3) (\Delta + 6 B \, ((2 n + 3) \, j_0 - n \, j_1) \\
 & j_2 \mapsto (n + 3) \, \Delta - 6 (2 n + 3) \, B^2 \nonumber
\end{split} \end{align}
For the classification of quadratic involutions in the Cremona group $Bir \left( \mathbb{P}_\mathbb{C}^2 \right)$ see \citet{Des2012}, section 4. The connection map above is the unique (up to linear transformations) quadratic involution with $detjac$ the union of three lines in general position, see Theorem 4.2.2 in \citet{Des2012}. This is the Veronese map $\sigma : (x : y : z) \dashrightarrow (y z : z x : xy)$.  \\

\noindent Exercise for the reader:

Find the two positive, optimal and 1-periodic quadrature rules given in \citet{Ait2020} in sections 4.1 and 4.2 for odd $n$ as fixed points of this recursion map. Show that among the 4 finite fixed points the rational fixed point (hint: each finite fixed point fulfills $E (n) = 0$ or $G_1 (n) = 0$) with
positive $l_1$ component is a quadratic attractor, this means quadratic convergence for all uniform 
$C^1$ rules in a closed interval. Show that the 2 rational fixed points generate the same quadrature
rule as in \citet{Ait2020}, section 4.1. The quadratic convergence for uniform $C^1$ quintic rules in
a closed interval is already proven in \citet{BAC2017}.

\section{Some examples for suboptimal $C^0$ quadrature rules \label{c0Example}}

Some facts about $C^0$ quadrature rules for splines with \textit{even} degree $2n$ in the closed
interval $[a, b]$: \\
There exist only \textit{suboptimal} rules depending on a free parameter. The node distribution:
all subintervals except one have $n$ nodes, the remaining one has $n+1$ nodes.
Here I call the subinterval with $n+1$ nodes "subinterval in the middle" though it can be
one at the boundary of $[a, b]$ too. In the contrary to cases with $c \ge 1$ we get here
real (not only algebraic) quadrature rules for all positions of the "subinterval in the middle"
even for the case of arbitrary non-uniform subintervals. \\

In the first example we take as interval $[a, b]$ the interval $[0, 4]$. This interval is devided in the
4 uniform subintervals 1 .. 4 of length 1. Subinterval 3 is the one in the middle. Subintervals 1, 2, 4
have 2 nodes, subinterval 3 has 3 nodes, so the suboptimal quadrature rule has degree 4.  \\

Because here the quadrature rules are suboptimal, we have a free parameter say $\omega$ that affects
only the "subinterval in the middle". Instead of taking the polynomial $M_{n+1} ()$ to get the nodes of this interval, we can use $M_{n+1} () + \omega M_n ()$. So we have the freedom to position one node e.g at the boundary of the subinterval.

\begin{remark}
This construction is already known for the case of the classical polynomial quadrature with Legendre
orthogonal polynomials $P_{n} (x)$. For even degree $2n$ the quadrature rule is suboptimal and
we have the freedom to take $P_{n} (x) + \omega P_{n-1} (x)$ as polynomial defining the nodes.
\end{remark}

Here the steps to construct the rule with the formulae in section \ref{c0Formulae}:
\begin{enumerate}[label=(\roman*)]
   \item Set the $\textbf{0}$-vectors of Dirac $\delta$ coefficients for the subintervals at the
         boundary $\textbf{l}_1 = \textbf{0}$ and $\textbf{r}_4 = \textbf{0}$.
         Calculate the remaining vectors $\textbf{l}_2 = Rec_2 (\textbf{l}_1),
         \textbf{l}_3 = Rec_2 (\textbf{l}_2), \textbf{r}_3 = Rec_2 (\textbf{r}_4)$ with formula
         (\ref{c0Rec}) for the recursion map. Step from the left and right to the subinterval in
         the "middle" which so gets 2 vectors, a left and right one. 
   \item Calculate the unscaled nodes for the subintervals 1, 2, 4 as roots of the polynomial (\ref{c0Q})
         and the unscaled weights with formula (\ref{c0Qw}).  
   \item For the subinterval 3 in the "middle" calculate $\omega$ to position one node, then get the
         nodes as roots of the polynomial (\ref{c0Momega}) and the weights with formula (\ref{c0Mw}).
         We have to choose $\omega = - M_{n+1} (\textbf{l}_3, \textbf{r}_3, -1) / M_n (\textbf{l}_3,
         \textbf{r}_3, -1)$ to get a node at $-1$.  
   \item Scale the nodes and weights to the destination subintervals: \\
         Scaling from [-1, +1] to $[c, d]$ maps: $x_i \mapsto (x_i (d - c) + (c + d)) / 2$, \\
         $w_i \mapsto w_i (d - c) / 2$ 
\end{enumerate}

\begin{remark}
To see that the degree of the so constructed quadrature rule is $2n$ look at the condition in
theorem (\ref{TheoremRecMap}) for the subintervals of type $\textbf{Q}$. A function in these subintervals
is integrated exactly if the degree is $\le 2 n + c$. See the condition in the theorem for the subinterval
of type $\textbf{M}$ with one more node. A function in this subinterval is integrated exactly if the
degree is $\le 2 (n + 1) - 1$. Because we are using $M_{n+1,\omega}$ instead of $M_{n+1}$ we have to
lower this degree by 1. So the degree of the whole rule is $2n$.  
\end{remark}

See table \ref{TabDeg4} with nodes and weights as algebraic numbers for this quadrature rule with degree 4.
Only square roots are needed, because the rule is suboptimal and so node 5 can be placed at a rational $x_i$. \\

\begin{table}[!h]
  \begin{center}
    \caption{A suboptimal, uniform quadrature rule with degree 4 for continuity class $c = 0$,
             4~subintervals: $[0, 1] \enspace [1, 2] \enspace [2, 3]^\textbf{m} \enspace [3, 4]$}
    \label{TabDeg4}
    \begin{tabular}{ c || c | c | c || c | c | }
      \multicolumn{6}{c}{}  \\ 
      \textbf{i} & \textbf{s} & \textbf{coeff.} $\mathbf{\delta}$ & \textbf{type}
                 & $\mathbf{x_i}$ & $\mathbf{w_i}$ \\
      \hline
          1     & 1   & $\textbf{l}_1 = (0)$  & $Q_2 (\textbf{l}_1, x)$
                                                          & $\frac{2}{5} - \frac{\sqrt{6}}{10}$
                                                               & $\frac{4}{9} - \frac{\sqrt{6}}{36}$  \\
          2     &     &                       &           & $\frac{2}{5} + \frac{\sqrt{6}}{10}$
                                                               & $\frac{4}{9} + \frac{\sqrt{6}}{36}$  \\
      \hline
          3     & 2   & $\textbf{l}_2 = (2/9)$ & $Q_2 (\textbf{l}_2, x)$
                                                     & $\frac{34}{25} - \frac{\sqrt{174}}{50}$
                                                       & $\frac{76}{153} - \frac{21 \sqrt{174}}{5916}$  \\
          4     &     &                       &      & $\frac{34}{25} + \frac{\sqrt{174}}{50}$
                                                       & $\frac{76}{153} + \frac{21 \sqrt{174}}{5916}$  \\
      \hline
          5     & 3   & $\textbf{l}_3 = (4/17)$  
                                              & $M_3 (\textbf{l}_3, \textbf{r}_3, x)$
                                                        & $ \small 2$
                                                           & $\frac{4}{17}$                               \\
          6     &     & $\textbf{r}_3 = (2/9)$  & $ + \omega M_2 (\textbf{l}_3, \textbf{r}_3, x)$
                                                        & $\frac{66}{25} - \frac{\sqrt{174}}{50}$
                                                           & $\frac{76}{153} + \frac{7\sqrt{174}}{1972}$  \\
          7     &     & $\omega = 7/5$       &          & $\frac{66}{25} + \frac{\sqrt{174}}{50}$
                                                           & $\frac{76}{153} - \frac{7\sqrt{174}}{1972}$  \\
      \hline
          8     & 4   & $\textbf{r}_4 = (0)$  & $Q_2 (\textbf{r}_4, - x)$ refl. 
                                                          & $\frac{18}{5} - \frac{\sqrt{6}}{10}$
                                                               & $\frac{4}{9} + \frac{\sqrt{6}}{36}$  \\
          9     &     &                       &           & $\frac{18}{5} + \frac{\sqrt{6}}{10}$
                                                               & $\frac{4}{9} - \frac{\sqrt{6}}{36}$  \\
      \hline
    \end{tabular}
  \end{center}
\end{table}

See table \ref{TabDeg6} with nodes and weights as floating point numbers for a quadrature rule with
degree 6.
 
\begin{table}[!h]
  \begin{center}
    \caption{A suboptimal, uniform quadrature rule with degree 6 for continuity class $c = 0$,
             4~subintervals: $[0, 1]^\textbf{m} \enspace [1, 2] \enspace [2, 3] \enspace [3, 4]$}
    \label{TabDeg6}
    \begin{tabular}{ c || c | c | c || c | c | }
      \multicolumn{6}{c}{}  \\ 
      \textbf{i} & \textbf{s} & \textbf{coeff.} $\mathbf{\delta}$ & \textbf{type}
                 & $\mathbf{x_i}$ & $\mathbf{w_i}$ \\
      \hline
          1     & 1   & $\textbf{l}_1 = (0)$  
                                              & $M_4 (\textbf{l}_1, \textbf{r}_1, x)$
                                                          & $0$
                                                               & $0.0645497136$  \\
          2     &     & $\textbf{r}_1 = (63/488)$  & $ + \omega M_3 (\textbf{l}_1, \textbf{r}_1, x)$
                                                          & $0.2193254677$
                                                               & $0.3397035713$  \\
          3     &     & $\omega = 559/433$ &              & $0.6102277570$
                                                               & $0.4016942462$  \\
          4     &     &                    &              & $0.9470881476$
                                                               & $0.2586016489$  \\
      \hline
          5     & 2   & $\textbf{r}_2 = (4/31)$  & $Q_3 (\textbf{r}_2, - x)$ refl.
                                                          & $1.2193236472$
                                                               & $0.3397007352$  \\
          6     &     &                       &           & $1.6102225842$
                                                               & $0.4016906147$  \\
          7     &     &                       &           & $1.9470771451$
                                                               & $0.2585755986$  \\
      \hline
          8     & 3   & $\textbf{r}_3 = (1/8)$  & $Q_3 (\textbf{r}_3, - x)$ refl.
                                                          & $2.2192108353$
                                                               & $0.3395249876$  \\
          9     &     &                       &           & $2.6099020423$
                                                               & $0.4014656053$  \\
         10     &     &                       &           & $2.9463973263$
                                                               & $0.2569932780$  \\
      \hline
         11     & 4   & $\textbf{r}_4 = (0)$  & $Q_3 (\textbf{r}_4, - x)$ refl.
                                                          & $3.2123405382$
                                                               & $0.3288443199$  \\
         12     &     &                       &           & $3.5905331355$
                                                               & $0.3881934688$  \\
         13     &     &                       &           & $3.9114120404$
                                                               & $0.2204622111$  \\
      \hline
    \end{tabular}
  \end{center}
\end{table}

Another example now with non-uniform subintervals is table \ref{TabNonUniformDeg4}. Here the
quadrature rule is defined in the interval $[a, b] = [0, 15]$. The non-uniform subintervals are
$[0, 1] \enspace [1, 3] \enspace [3, 7] \enspace [7, 15]$ i.e. a stretching $\lambda$ factor of 2.
The subinterval in the "middle" is here subinterval 4 at the right boundary. In step 1)
when calculating the Dirac $\delta$ coefficients for this non-uniform rule instead of $\textbf{l}_2 = Rec_2 (\textbf{l}_1) \dots$ the recursion map \textit{with stretching} has to be used:
$\textbf{l}_2 = RecS_2 (\textbf{l}_1, \lambda) \dots$.

\begin{remark}
For continuity class $c=0$ and even degree $2n$ of the suboptimal quadrature rule we get for each number of
subintervals $s$ different quadrature rules ($s$~rules depending on the selection of the subinterval in
the "middle"). This rules can be still distinguished by the combinatorial node distribution (because
one subinterval has $n+1$ nodes).
The case of odd degree $2n+1$ and odd $s$, a so called 1/2 optimal rule, is not shown here. In this case
we get $\lceil s \rceil$ different optimal rules (alternating $n+1$ and $n$ nodes) which \textit{can not}
be distinguished by the combinatorial node distribution. This has to be taken in account when using
existing numerical solvers (e.g. as in \citet{BC2016} by homotopy methods).
\end{remark}

\begin{table}[!h]
  \begin{center}
    \caption{A suboptimal, non-uniform quadrature rule with degree 4 for continuity
             class $c = 0$,
             4~subintervals: $[0, 1] \enspace [1, 3] \enspace [3, 7] \enspace [7, 15]^\textbf{m}$,
             stretching factor $\lambda = 2$}
    \label{TabNonUniformDeg4}
    \begin{tabular}{ c || c | c | c || c | c | }
      \multicolumn{6}{c}{}  \\ 
      \textbf{i} & \textbf{s} & \textbf{coeff.} $\mathbf{\delta}$ & \textbf{type}
                 & $\mathbf{x_i}$ & $\mathbf{w_i}$ \\
      \hline
          1     & 1   & $\textbf{l}_1 = (0)$  & $Q_2 (\textbf{l}_1, x)$
                                                          & $\frac{2}{5} - \frac{\sqrt{6}}{10}$
                                                               & $\frac{4}{9} - \frac{\sqrt{6}}{36}$  \\
          2     &     &                       &           & $\frac{2}{5} + \frac{\sqrt{6}}{10}$
                                                               & $\frac{4}{9} + \frac{\sqrt{6}}{36}$  \\
      \hline
          3     & 2   & $\textbf{l}_2 = (1/9)$ &  $Q_2 (\textbf{l}_2, x)$
                                                          & $\frac{7}{4} - \frac{\sqrt{105}}{20}$
                                                               & $\frac{110}{117} - \frac{10 \sqrt{105}}{819}$  \\
          4     &     &                       &           & $\frac{7}{4} + \frac{\sqrt{105}}{20}$
                                                               & $\frac{110}{117} + \frac{10 \sqrt{105}}{819}$  \\
      \hline
          5     & 3   & $\textbf{l}_3 = (3/26)$  & $Q_2 (\textbf{l}_3, x)$ 
                                                          & $\frac{787}{175} - \frac{2 \sqrt{8061}}{175}$
                                                               & $\frac{4189}{2223} - \frac{16522 \sqrt{8061}}{5973201}$  \\
          6     &     &                       &           & $\frac{787}{175} + \frac{2 \sqrt{8061}}{175}$
                                                               & $\frac{4189}{2223} + \frac{16522 \sqrt{8061}}{5973201}$  \\
      \hline
          7     & 4   & $\textbf{l}_4 = (79/684)$  
                                              & $M_3 (\textbf{l}_4, \textbf{r}_4, x)$
                                                          & $ \small 7$
                                                               & $\frac{77}{57}$  \\
          8     &     & $\textbf{r}_4 = (0)$  & $ + \omega M_2 (\textbf{l}_4, \textbf{r}_4, x)$
                                                          & $\frac{59}{5} - \frac{4 \sqrt{6}}{5}$
                                                               & $\frac{32}{9} + \frac{2 \sqrt{6}}{9}$  \\
          9     &     & $\omega = 1$           &          & $\frac{59}{5} + \frac{4 \sqrt{6}}{5}$
                                                               & $\frac{32}{9} - \frac{2 \sqrt{6}}{9}$  \\
      \hline
    \end{tabular}
  \end{center}
\end{table}

\section{Some examples for optimal $C^1$ Gaussian quadrature rules \label{c1Examples}}

Some facts about $C^1$ quadrature rules for splines with \textit{odd} degree $2n + 1$ in the closed
interval $[a, b]$: \\
The node distribution of these Gaussian, optimal rules:
all subintervals except one have $n$ nodes, the remaining one has $n+1$ nodes.
Here I call the subinterval with $n+1$ nodes "subinterval in the middle". In the contrary to cases
with $c = 0$ we get here real (not only algebraic) uniform quadrature rules only for positions of the "subinterval in the middle" now really near the middle or in the middle. \\

See table \ref{TabDeg5c1} for a symmetric rule and table \ref{TabDeg7c1} for an asymmetric rule.
Now in the steps (i) - (iii) to construct the rules (see appendix \ref{c0Example}) the formulae in
appendix \ref{c1Formulae} have to be used. Because these $C^1$ rules are optimal and so do not
have a free parameter, $\omega$ in step (iii) has to be set to $0$.

\begin{table}[!h]
  \begin{center}
    \caption{An optimal, uniform, symmetric quadrature rule with degree 5 for continuity class $c = 1$,
             5~subintervals: $[0, 1] \enspace [1, 2] \enspace [2, 3]^\textbf{m} \enspace [3, 4]$
                             $\enspace [4, 5]$, the symmetric nodes 7 - 11 are not shown}
    \label{TabDeg5c1}
    \begin{tabular}{ c || c | c | c || c | c | }
      \multicolumn{6}{c}{}  \\ 
      \textbf{i} & \textbf{s} & \textbf{coeff.} $\mathbf{\delta}$ & \textbf{type}
                 & $\mathbf{x_i}$ & $\mathbf{w_i}$ \\
      \hline
          1     & 1   & $\textbf{l}_1 = (0, 0)$  & $Q_2 (\textbf{l}_1, x)$
                                                     & $\frac{1}{3} - \frac{\sqrt{10}}{15}$
                                                          & $\frac{85}{216} - \frac{25 \sqrt{10}}{864}$  \\
          2     &     &                       &      & $\frac{1}{3} + \frac{\sqrt{10}}{15}$
                                                          & $\frac{85}{216} + \frac{25 \sqrt{10}}{864}$  \\
      \hline
          3     & 2   & $\textbf{l}_2 = (23/108, 1/432)$ & $Q_2 (\textbf{l}_2, x)$
                                                     & $\frac{465}{371} - \frac{\sqrt{209770}}{1855}$
                                                       & $\frac{972835}{20357784} -
 \frac{53657125 \sqrt{209770}}{569393646624}$  \\
          4     &     &                       &      & $\frac{465}{371} + \frac{\sqrt{209770}}{1855}$
                                                       & $\frac{972835}{20357784} +
 \frac{53657125 \sqrt{209770}}{569393646624}$  \\
      \hline
          5     & 3   & $\textbf{l}_3 = (593446/2544723,$  
                                              & $M_3 (\textbf{l}_3, \textbf{r}_3, x)$
                                                        & $\frac{5}{2} - \frac{\sqrt{11868463}}
{2 \sqrt{11870305}}$
                                                           & $\frac{28180828158605}{60403901541498}$  \\
          6     &     & $ \qquad 23/8289), \textbf{r}_3 = \textbf{l}_3$  & 
                                                        & $\frac{5}{2}$
                                                           & $\frac{18989540}{35605389}$  \\
      \multicolumn{6}{c}{\dots} \\
      \hline
    \end{tabular}
  \end{center}
\end{table}

\begin{table}[!h]
  \begin{center}
    \caption{An optimal, non-uniform, asymmetric quadrature rule with degree 7 for continuity class $c = 1$,
             4~subintervals: $[0, 1] \enspace [1, 3] \enspace [3, 7]^\textbf{m} \enspace [7, 9]$,
                             stretching factor $\lambda = 2$}
    \label{TabDeg7c1}
    \begin{tabular}{ c || c | c | c || c | c | }
      \multicolumn{6}{c}{}  \\ 
      \textbf{i} & \textbf{s} & \textbf{coeff.} $\mathbf{\delta}$ & \textbf{type}
                 & $\mathbf{x_i}$ & $\mathbf{w_i}$ \\
      \hline
          1     & 1   & $\textbf{l}_1 = (0, 0)$  & $Q_3 (\textbf{l}_1, x)$
                                                     & $0.0729940240$  & $0.1828570141$  \\
          2     &     &                       &      & $0.3470037660$  & $0.3429757724$  \\
          3     &     &                       &      & $0.7050022098$  & $0.3441672133$  \\
      \hline
          4     & 2   & $\textbf{l}_2 = (13/200, 1/4800)$ & $Q_3 (\textbf{l}_2, x)$
                                                     & $1.0560478113$  & $0.4256711849$  \\
          5     &     &                       &      & $1.6388513157$  & $0.7163358746$  \\
          6     &     &                       &      & $2.3854005088$  & $0.7171809582$  \\
      \hline
          7     & 3   & $\textbf{l}_3 = (223758915/3305007602,$  
                                              &      & $3.1038729543$  & $0.8510463517$  \\
          8     &     & $ \quad  147/650416)$ & $M_4 (\textbf{l}_3, \textbf{r}_3, x)$ 
                                                     & $4.2595711727$  & $1.4178548432$  \\
          9     &     & $\textbf{r}_3 = (13/200, 1/4800)$          & 
                                                     & $5.7365650016$  & $1.4177054729$  \\
          10    &     &                       &      & $6.8904874142$  & $0.8442053143$  \\
      \hline
          11    & 4   & $\textbf{r}_4 = (0, 0)$ & $Q_3 (\textbf{r}_4, - x)$ refl.
                                                     & $7.5899955802$  & $0.6883344267$  \\
          12    &     &                       &      & $8.3059924679$  & $0.6859515449$  \\
          13    &     &                       &      & $8.8540119518$  & $0.3657140283$  \\
      \hline
    \end{tabular}
  \end{center}
\end{table}

\vfill

\bibliographystyle{elsarticle-num} 

\end{document}